\def\NZQ{\mathbb}               
\def\QQ{{\NZQ Q}}
\def\RR{{\NZQ R}}
\def\EE{{\mathbb E}}
\def\frk{\mathfrak}               
\def\Phi{{\frk n}}
\def\Phi{{\frk N}}
\def\Oc{{\mathcal O}}
\def\R{{\mathbb R}}
\def\opn#1#2{\def#1{\operatorname{#2}}} 
\opn\chara{char} \opn\length{\ell} \opn\pd{pd} \opn\rk{rk}
\opn\projdim{proj\,dim} \opn\injdim{inj\,dim} \opn\rank{rank}
\opn\depth{depth} \opn\grade{grade} \opn\height{height}
\opn\embdim{emb\,dim} \opn\codim{codim}
\opn\Tr{Tr} \opn\bigrank{big\,rank}
\opn\superheight{superheight}\opn\lcm{lcm}
\opn\trdeg{tr\,deg}
\opn\reg{reg} \opn\lreg{lreg} \opn\ini{in} \opn\lpd{lpd}
\opn\size{size} \opn\sdepth{sdepth}
\opn\link{link}\opn\fdepth{fdepth}\opn\lex{lex}
\opn\LM{LM}
\opn\LC{LC}
\opn\NF{NF}
\opn\Merge{Merge}
\opn\sgn{sgn}
\opn\div{div} \opn\Div{Div} \opn\cl{cl} \opn\Pic{Pic}
\opn\Prin{Prin}
\opn\op{op}
\opn\indeg{indeg} \opn\outdeg{outdeg}
\opn\red{red}
\opn\Spec{Spec} \opn\Supp{Supp} \opn\supp{supp} \opn\Sing{Sing}
\opn\Ass{Ass} \opn\Min{Min}\opn\Mon{Mon}
\opn\Ann{Ann} \opn\Rad{Rad} \opn\Soc{Soc}
\opn\Im{Im}
  \opn\Ker{Ker} \opn\Coker{Coker} \opn\Am{Am}
 \opn\Hom{Hom} \opn\Tor{Tor} \opn\Ext{Ext} \opn\End{End}
 \opn\Aut{Aut} \opn\id{id}
\opn\nat{nat}
\opn\pff{pf}
\opn\Pf{Pf} \opn\GL{GL} \opn\SL{SL} \opn\mod{mod} \opn\ord{ord}
\opn\Gin{Gin} \opn\Hilb{Hilb}\opn\sort{sort}
\opn\span{span}
\opn\Image{Image}
\theoremstyle{plain}%
\newtheorem{theorem}{Theorem}
\numberwithin{theorem}{section}
\newtheorem{proposition}[theorem]{Proposition}
\newtheorem{example}[theorem]{Example}
\newtheorem{lemma}[theorem]{Lemma}
\newtheorem{corollary}[theorem]{Corollary}
\newtheorem{remark}[theorem]{Remark}
\newtheorem{conjecture}[theorem]{Conjecture}
\def\R{\mathbb{R}}
\tikzstyle{Cwhite}=[scale = .8,circle, fill = white, minimum size=3mm] 
\tikzstyle{Cgray}=[scale = .4,circle, fill = gray, minimum size=3mm] 
\tikzstyle{Cblack2}=[scale = .4,circle, fill = black, minimum size=5mm] 
\tikzstyle{Cblack}=[scale = .7,circle, fill = black, minimum size=3mm]
\tikzstyle{C0}=[scale = .9,circle, fill = black!0, inner sep = 0pt, minimum size=3mm]
\tikzstyle{C1}=[scale = .7,circle, fill = black!0, inner sep = 0pt, minimum size=3mm]
\tikzstyle{Cred}=[scale = .4,circle, fill = red, minimum size=3mm] 
\date{}
\begin{document}

\title{\bf A Family of Quasisymmetry Models}

\author{Maria Kateri, Fatemeh Mohammadi and Bernd Sturmfels}

\maketitle

 \begin{abstract} \noindent
We present a one-parameter family of  models for
square contingency tables
that interpolates between 
the classical quasisymmetry model and
its Pearsonian analogue.
Algebraically, this corresponds to deformations of
toric ideals associated with graphs.
Our discussion of the statistical issues centers around
maximum likelihood estimation.
\end{abstract}

\bigskip

\noindent
{\bf Keywords:} square contingency tables; algebraic statistics; toric models; linear models; \\ maximum likelihood estimation; $\phi$-divergence.

\section{Introduction}

Consider a square contingency table with commensurable row and column classification variables $X$ and $Y$. Such tables can
 arise from cross-classifying repeated measurements of a categorical response variable. They are common in panel and social mobility studies. One of the most cited examples,  taken from \cite{S}, 
is shown in Table \ref{vision_data}. It cross-classifies $7477$ 
female subjects
according to  the
distance vision levels of their right and left~eyes.

\begin{table}[h]
\label{vision_data}
\begin{center}
\qquad \qquad \qquad \qquad {\bf Left Eye Grade}  \\ 
\begin{tabular}{c@{~~~}c@{~~~}c@{~~~}c@{~~~}c@{~~~}c}
\hline 
 {\bf Right Eye Grade} & best & second & third & worst & \\
\hline
best & 1520  & 266 & 124 & 66 & \\ 
second & 234 & 1512 &  432 &  78 & \\
third & 117 & 362 & 1772 & 205 & \\
worst & 36 & 82 & 179 & 492 & \\
 \hline
\end{tabular}
\end{center}
\vspace{-0.1in}
\caption{Cross classification of 7477 women by unaided distance 
vision of right and left eyes.  } 
\end{table}

The most parsimonious  model for such tables is the
 symmetry (S) model, due to \cite{B}. 
While the S model is easy to interpret, it is too restrictive and rarely fits well.
 An important model  that 
is often of adequate fit
is the quasi-symmetry (QS) model of \cite{C}. \cite{KP} studied  the QS model from the information-theoretic point of view and generalized it to a family of models based on the $\phi$-divergence \citep{Pardo}. In their framework,
 classical QS is closest to the S model under the Kullback-Leibler divergence.
 However, by changing the divergence used to measure 
proximity of distributions, alternative QS models are 
found. For instance, the
Pearsonian divergence yields the  Pearsonian QS model. 
For the data in Table \ref{vision_data}, 
\cite{BFH} applied the QS model,
while \cite{KP} applied  the Pearsonian QS model,
and here these two lead to estimates of similar fit.
However, there are other data sets where only
one of them performs well. Our goal  is to link these 
two models. We shall construct  a one-parameter family of QS models 
that connects these two. In this way, more options for data analysis
are available. 
In case of a single square contingency table, the optimal choice of this model parameter would be of interest. However, the more interesting practical application lies in analyzing and comparing independent square tables of the same set-up, when they cannot be modeled adequately all by the same (classical or Pearsonian) QS model. For example, consider the same panel study carried out at two independent centers, with one of them being modeled only by the classical QS and the other only by the Pearsonian QS. In this scenario, the two fitted models are not as comparable as we would like. Our approach furnishes in-between compromise models. 

Our family exhibits interesting properties when viewed from
the perspective of algebraic statistics \citep{DSS}.
It interpolates between 
two fundamental classes of discrete variable models,
namely, toric models and linear models \citep[\S 1.2]{ASCB}.
Indeed, the QS model is toric, and its Markov basis
is well-known, by work of  \cite{Rap}
and  Latunszynski-Trenado \citep[\S 6.2]{DSS}.
The Pearsonian QS model reduces to a linear
model, specified by the second factors in (\ref{QS_1}).
Its ML degree is the number of bounded regions in the
arrangement of hyperplanes $\{a_i  - a_j = 1 \}$,
   by Varchenko's formula \citep[Theorem 1.5]{ASCB}.

This paper is organized as follows. Our parametric family 
of QS models is introduced in Section 2. In Section 3 we derive
the implicit representation of our model by polynomial
equations in the cell entries. That section is written in the
algebraic language of ideals and varieties. It will be of independent interest
to scholars in combinatorial commutative algebra \citep{MillerSturmfels, GrobnerBasis}.
Maximum likelihood estimation (MLE) and the fit of the model are 
discussed in Section~4. 
Section~5 examines a natural submodel given by independence constraints.
 Section~6 discusses statistical applications and presents
computations with concrete data sets.
Section~7 offers an information-theoretic characterization in terms of $\phi$-divergence, following \cite{KP} and \cite{Pardo}.

\section{Quasisymmetry Models}

We consider models for square contingency tables of format $I \times I$.
Probability tables ${\bf p} = (p_{ij})$ are points in the simplex  $\Delta_{I^2-1}$.
Here $p_{ij }$ is the probability that an
observation falls in the $(i,  j)$ cell. We write
  ${\bf n} = (n_{ij})$ for the table of observed frequencies.
The model of symmetry (S) is 
\begin{equation}\label{S}
\qquad p_{ij}=s_{ij} \ \ \ \ \text{with parameters} \ \ s_{ij}=s_{ji} \ \ \ \ 
{\rm for} \,\,1 \leq i \leq j \leq I .
\end{equation}
Here, and in what follows, the table $(s_{ij})$ is non-negative and its entries sum to $1$.
Geometrically, the S model is a simplex of dimension
$\binom{I+1}{2}-1$ inside the ambient
probability simplex $\Delta_{I^2-1}$.
The classical {\em QS  model} can be defined, as a model of divergence from S, by
\begin{equation}\label{QS_0}
p_{ij}=s_{ij}\frac{2c_i}{c_i+c_j} \ , \  \ \ \  i,j=1, \ldots, I .
\end{equation}
The {\em Pearsonian QS model} is defined by the parametrization
\begin{equation}\label{QS_1}
p_{ij}=s_{ij}(1+a_i-a_j) \ , \  \ \ \ i,j=1, \ldots, I .
\end{equation}
Both models are semialgebraic subsets of dimension $\binom{I+1}{2}+I-2$ in
the simplex $\Delta_{I^2-1}$. The S model is the subset obtained 
respectively for  $c_1 = \cdots = c_I$ in (\ref{QS_0})
or $a_1 = \cdots = a_I$ in (\ref{QS_1}).

We here study the following quasisymmetry model  (${\rm QS}_t$),
where  $t\in [0, 1]$  is a parameter:
\begin{equation}\label{QS_t_a}
p_{ij}=s_{ij}\left(1+\frac{(1+t)(a_i-a_j)}{2+(1-t)(a_i+a_j)}\right) \ , \ \ \  i\neq j, \ \ \  i,j=1, \ldots, I .
\end{equation}
In all three models, the matrix entries on the diagonal are set to $p_{ii} = s_{ii}$ for $i=1,\ldots,I$.
For $t = 1$, the model (\ref{QS_t_a}) specializes to the 
Pearsonian QS model (\ref{QS_1}). For $t=0$, it
specializes to the QS model (\ref{QS_0}),
if we set $a_i = c_i - 1$. 
The parameters
$a_i$ will be assumed to satisfy the restriction
\begin{eqnarray}\label{eq:a_i parameters}
t \cdot {\rm max}_i a_i-{\rm min}_i a_i \, \leq \,1.
\end{eqnarray}
Since we had assumed $0 \leq s_{ij} \leq 1/2$,
the constraint
\eqref{eq:a_i parameters} on the $a_i$ ensures that
the $p_{ij}$ are probabilities
(i.e.~lie in the interval $[0,1]$).
Furthermore, if we  change the parameters via
\[s_{ii} = x_{ii}\, \,\,\,\hbox{for}\, \,i=j, \quad \hbox{and} \quad
s_{ij}=x_{ij}\left(1+(1-t)\frac{a_i+a_j}{2}\right) \ \ \,  \hbox{for} \,\, i \not= j, \]
then the model (${\rm QS}_t$), defined in (\ref{QS_t_a}), is rewritten in the simpler form
\begin{equation}\label{QS_t}
p_{ij}=x_{ij}(1+a_i-t a_j) \ , \ \ \  i\neq j, \ \ \  i,j=1, \ldots, I .
\end{equation}
Note that $x_{i+} = \sum_{j=1}^I x_{ij} = \sum_{j=1}^I x_{ji} = x_{+i}$, since the table $(x_{ij})$
is also symmetric.
For $t=1$, the probabilities defined by (\ref{QS_t}) satisfy $\sum_{i=1}^I p_{ij}=1$ for all $j$.
In order to ensure that $\sum_{i=1}^I p_{ij}=1$ for $t\neq 1$ as well, 
we use the `weighted sum to zero' constraint
\begin{equation}\label{constr}
\sum_{i=1}^I (x_{i+}-x_{ii})a_i = 0 .
\end{equation}

The expressions (\ref{QS_t_a})  and  (\ref{QS_t}) 
are equivalent. Whether one or the other is preferred is a matter of
convenience. Maximum likelihood estimation is easier with
(\ref{QS_t_a}), since the MLEs of the $s_{ij}$ are rational functions
of the observed frequencies $n_{ij}$. The estimates of the $a_i$
depend algebraically on ${\bf n}$, and they generally have
to be computed by an iterative method. In the formulation
(\ref{QS_t}), none of the parameters have estimates 
that are rational in ${\bf n}$.
We shall see this in Section 4. On the other hand,
for our algebraic analysis of the ${\rm QS}_t$ model, it is 
more convenient to use (\ref{QS_t}).

\begin{example} \label{ex:running} \rm
Fix $I= 3$. For any fixed $t$, the model (\ref{QS_t})
is a hypersurface in the simplex $\Delta_8$ of
all $3 \times 3$ probability tables. This hypersurface is the zero set of the cubic polynomial
\begin{equation}
\label{eq:cubic}
 \begin{matrix}  (1+t+t^2)(p_{12}p_{23}p_{31}-p_{21}p_{32}p_{13}) \,+\, \\
t(p_{12}p_{23}p_{13}+p_{12}p_{32}p_{31}+p_{21}p_{23}p_{31}-p_{12}p_{32}p_{13}-p_{21}p_{23}p_{13}-p_{21}p_{32}p_{31}).
\end{matrix} 
\end{equation}
For $t = 0$, we recover the familiar binomial relation
that encodes the cycle of length three \citep[\S 6.2]{DSS}.
Thus, our family of ${\rm QS}_t$ models 
represents a deformation of that Markov basis:
$$ p_{12}p_{23}p_{31}-p_{21}p_{32}p_{13} + O(t) .$$
The generalization of the relation (\ref{eq:cubic}) to higher values of $I$
will be presented in Section 3. \hfill $\diamondsuit$
\end{example}
 
Another characteristic model for square tables with commensurable classification variables is the model of {\it marginal homogeneity} (MH). This is specified by the equations
\begin{equation}\label{MH}
p_{i+}\,=\,p_{+i} \, \quad \hbox{for} \,\,\,\, i=1, \ldots, I .
\end{equation}
The model of symmetry S implies MH and QS, {\it i.e.~}(\ref{QS_0}) with $c_1 = \cdots = c_I$. 
By \citet[\S 8.2.3]{BFH},
 if the models MH and QS hold simultaneously, then S is implied. In symbols,
 $\text S=\text {MH} \cap \text {QS}$.
 This identity is important in that it
 underlines the role of the parameters $c_i$
 in the QS model. These express the contribution of the classification category $i$ to marginal inhomogeneity. We shall prove next that
 the same identity  holds for our generalized ${\rm QS}_t$ model. 

\begin{proposition}\label{prop1}
For any $t \in [0, 1]$, we have $\text S=\text {MH} \cap QS_t$.
\end{proposition}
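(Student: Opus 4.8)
The plan is to establish the two inclusions $\text S \subseteq \text{MH}\cap{\rm QS}_t$ and $\text{MH}\cap{\rm QS}_t \subseteq \text S$ separately, mirroring the classical identity $\text S=\text{MH}\cap\text{QS}$ of \citet[\S 8.2.3]{BFH}. The first inclusion is immediate. If ${\bf p}$ lies in S then $p_{ij}=p_{ji}$, whence $p_{i+}=\sum_j p_{ij}=\sum_j p_{ji}=p_{+i}$, so MH holds; and setting $a_1=\cdots=a_I$ (say $a_i\equiv 0$, which satisfies (\ref{constr}) trivially) in the parametrization (\ref{QS_t}) gives $p_{ij}=x_{ij}$ with $(x_{ij})$ symmetric, exhibiting every symmetric table as a point of ${\rm QS}_t$. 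Hence $\text S\subseteq\text{MH}\cap{\rm QS}_t$.

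For the reverse inclusion I would work in the form (\ref{QS_t}), so that a point of ${\rm QS}_t$ is encoded by a symmetric table $(x_{ij})=(x_{ji})$ and reals $a_1,\dots,a_I$ with $p_{ij}=x_{ij}(1+a_i-ta_j)$ for $i\neq j$. The first step is to isolate the antisymmetric part of ${\bf p}$: using $x_{ij}=x_{ji}$,
\[
 p_{ij}-p_{ji}\;=\;x_{ij}\bigl[(1+a_i-ta_j)-(1+a_j-ta_i)\bigr]\;=\;(1+t)\,x_{ij}(a_i-a_j),
\]
so membership in S is equivalent to the vanishing of every product $x_{ij}(a_i-a_j)$. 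Since the diagonal terms cancel and $1+t>0$ for $t\in[0,1]$, the marginal homogeneity equations $\sum_j(p_{ij}-p_{ji})=0$ become
\[
 \sum_{j=1}^I x_{ij}(a_i-a_j)\;=\;0 \qquad (i=1,\dots,I).
\]

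The crux is to promote these $I$ linear relations to the pointwise vanishing $x_{ij}(a_i-a_j)=0$. I would multiply the $i$-th equation by $a_i$, sum over $i$, and symmetrize the resulting double sum using $x_{ij}=x_{ji}$; this collapses it into a single sum of squares,
\[
 \sum_{i,j} x_{ij}(a_i-a_j)^2\;=\;0,
\]
which is exactly the assertion that the quadratic form of the weighted graph Laplacian with edge weights $x_{ij}$ annihilates the vector $(a_i)$. The nonnegativity $x_{ij}\geq 0$ is what makes each summand nonnegative, forcing all of them to vanish. This nonnegativity is precisely what the constraint (\ref{eq:a_i parameters}) provides: it guarantees $1+a_i-ta_j\geq 1-(t\,\max_k a_k-\min_k a_k)\geq 0$, so from $p_{ij}=x_{ij}(1+a_i-ta_j)\geq 0$ one reads off $x_{ij}\geq 0$. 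Given the vanishing of the squares, $x_{ij}(a_i-a_j)=0$ for all $i,j$, and the displayed formula for $p_{ij}-p_{ji}$ then yields $p_{ij}=p_{ji}$, i.e.\ ${\bf p}\in\text S$.

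The one genuine obstacle is this last deduction, since a single sum vanishing does not in general kill each term. What rescues the argument is positive semidefiniteness of the Laplacian, equivalently nonnegativity of the weights: its kernel consists of functions constant on each connected component of the support graph of $(x_{ij})$, so $a_i=a_j$ whenever $x_{ij}>0$ and $x_{ij}=0$ otherwise — the same conclusion either way. Everything else (the computation of $p_{ij}-p_{ji}$, the rewriting of MH, and the symmetrization) is routine. If one prefers to bypass the change of variables, the identical computation runs in the form (\ref{QS_t_a}) with the manifestly nonnegative weights $w_{ij}=2(1+t)s_{ij}/\bigl(2+(1-t)(a_i+a_j)\bigr)$.
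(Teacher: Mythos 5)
Your proof is correct, but it takes a genuinely different route from the paper's. The paper converts the MH equations into a linear system ${\bf B}{\bf a}={\bf 0}$ with ${\bf B}={\bf x}-\text{diag}({\bf x}{\bf 1})$, invokes strict diagonal dominance and the Levy--Desplanques theorem to get $\rank({\bf B})=I-1$, concludes ${\bf a}=a{\bf 1}$, and then splits into cases: for $t=1$ symmetry is immediate, while for $t\neq 1$ the constraint \eqref{constr} forces $a=0$. That argument needs the structural positivity assumption of Remark~\ref{rem:structural} (a column of $(x_{ij})$ with strictly positive entries), and the paper explicitly concedes that ``otherwise a separate argument is needed.'' Your symmetrization trick --- multiplying the $i$-th MH equation by $a_i$, summing, and collapsing the double sum to the weighted-Laplacian form $\sum_{i,j}x_{ij}(a_i-a_j)^2=0$ --- needs only $x_{ij}\geq 0$, makes no case distinction in $t$, never uses \eqref{constr}, and never needs to identify the $a_i$: it yields $x_{ij}(a_i-a_j)=0$ pointwise, which is exactly the statement $p_{ij}=p_{ji}$. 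This is both more elementary and more general; in particular it covers tables whose support graph is disconnected, where the $a_i$ need not be constant yet ${\bf p}$ is still symmetric --- precisely the degenerate cases the paper defers. What the paper's route buys instead is parameter-level information (all $a_i$ equal, and equal to zero when $t\neq 1$), i.e.\ a statement about identifiability beyond the set-theoretic identity. One small caveat in your write-up: deducing $x_{ij}\geq 0$ from $p_{ij}\geq 0$ breaks down in the edge case $1+a_i-ta_j=0$; it is cleaner to obtain nonnegativity from $s_{ij}\geq 0$ together with positivity of $2+(1-t)(a_i+a_j)$ (which must be nonzero for \eqref{QS_t_a} to be defined at all, and is nonnegative under \eqref{eq:a_i parameters}), or simply to run the computation with your fallback weights $w_{ij}$ directly in the form \eqref{QS_t_a}.
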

\begin{proof}
It is straightforward to verify that S implies MH and ${\rm QS}_t$ with $a_i=0$, for all $i$, which leads to $p_{ij}=x_{ij}=s_{ij}$, for all $i,j$.
On the other hand, under ${\rm QS}_t$ as defined by (\ref{QS_t}), we have
\begin{equation}\label{marg_differ}
p_{i+}-p_{+i}\,\,=\,\,(1+t)\biggl(a_i(x_{i+}-x_{ii})-\sum_{j\neq i} a_j x_{ij}\biggr) \, \qquad
\hbox{for} \,\,\,\, i=1, \ldots, I .
\end{equation}
Combining this with MH as in (\ref{MH}), and setting
 $y_i := x_{ii}-x_{i+}$,
the equation (\ref{marg_differ}) implies 
\begin{equation}\label{MH_condition}
\sum_{j\neq i} a_j x_{ij} + a_i y_i \,\,= \,\,0 \, \qquad
\hbox{for} \,\,\,\, i=1, \ldots, I .
\end{equation}
This can be written in
 the matrix form ${\bf B}{\bf a}={\bf 0}$, where ${\bf a}=(a_1, \ldots, a_I)^T$,
 ${\bf x}=(x_{ij})$, and
$$  \qquad \qquad {\bf B} \,= {\bf x}-\text{diag}({\bf x}{\bf 1}) \, = 
\begin{bmatrix}
 & &  & x_{1I} \\
 & \tilde{\bf B} &  & \vdots \\
 &  &  & x_{I-1,I} \\ 
x_{I1} & x_{I2} & \ldots & y_I \end{bmatrix} .
$$
The matrix $\tilde{\bf B}$ is strictly diagonally dominant, provided $|y_i|=x_{i+}-x_{ii}> \sum_{j\neq i}^{I-1} x_{ij}$.
This is ensured if all $x_{il}$ are positive, as in
Remark~\ref{rem:structural}; otherwise a separate argument is needed.

 By the Levy-Desplanques Theorem, the matrix $\tilde{\bf B}$ is invertible and $\rank(\tilde{\bf B})=I-1$.
Hence $\rank({\bf B})=I-1$, since ${\bf B}{\bf 1}={\bf 0}$.
Therefore, all solutions of ${\bf B}{\bf a}={\bf 0}$ have the form
 ${\bf a}=a{\bf 1}$ for some $a \in \R$.
For $t=1$, equation (\ref{QS_t}) now implies $p_{ij}=x_{ij}=s_{ij}$, for all $i, j$. 
For $t\neq 1$, combining (\ref{constr}) with the positivity of $x_{i+}-x_{ii}$, we get $a=0$. Hence 
symmetry S holds and the proof is complete.
\end{proof}
\begin{remark}\label{rem:structural}\rm
Contingency tables with structural zeros, i.e., cells of zero probability, are rare.
If they exist, they usually have a specific pattern (zero diagonal, triangular table). In our set-up it is realistic to assume that there exists an index $j$ such that $p_{ij}>0$ for all $i=1, \ldots, I$. Thus, without loss of generality, we can assume that $p_{iI}>0$ and therefore $x_{iI}>0$ for all $i=1, \ldots, I$.
\end{remark}

\begin{example} \rm ($I=3$)
Marginal homogeneity defines a 
linear space of codimension $2$, via
\begin{eqnarray*}
  p_{11}+p_{12}+p_{13} &=& p_{11}+p_{21}+p_{31},\\
  p_{21}+p_{22}+p_{23} &=& p_{12}+p_{22}+p_{32},\\
  p_{31}+p_{32}+p_{33} &=& p_{13}+p_{23}+p_{33}.
\end{eqnarray*}
Inside that linear subspace, the cubic  (\ref{eq:cubic})
factors into a hyperplane, which is the
 S model $\{p_{12} = p_{21}, \ p_{13} = p_{31}, \ p_{23} = p_{32}\}$,
 and a quadric, which 
 has no points with positive coordinates. \hfill $\diamondsuit$
\end{example}
In the light of Proposition \ref{prop1}, the parameter $a_i$ of the ${\rm QS}_t$ model can be interpreted as the contribution of each category $i$ to 
the {\em marginal inhomogeneity}. By this we mean the
difference of $a_i$ minus the weighted average of all $a_i$'s.
This is the parenthesized expression in the identity
\begin{equation}\label{marg_differ_a}
 p_{i+}-p_{+i}\,\,=\,\,(1+t)x_{i+}\left(a_i-\sum_{j} \frac{x_{ij}}{x_{i+}} a_j \right) \ , \qquad i,j=1, \ldots, I .
\end{equation}

\section{Implicit Equations}

We now examine the quasisymmetry models
${\rm QS}_t$ through the lens
of algebraic statistics \citep{DSS, ASCB, Rap}.
To achieve more generality and flexibility,
we fix an undirected simple graph $G$ with vertex set $\{1,2,\ldots,I\}$.
Let $\mathcal{I}_{G}$ denote the prime ideal of algebraic relations among the
quantities $p_{ij} = x_{ij} (1+a_i-ta_j)$
in (\ref{QS_t}), where $\{i,j\}$ runs over the edge set $E(G)$ of the graph $G$.
The ideal $\mathcal{I}_G$  lives in the polynomial ring
$\mathbb{K}[\,p_{ij},p_{ji}: \{i,j\} \in E(G) \,]$. Here we take
$\mathbb{K} = \mathbb{Q}[[t]]$
to be the local ring of formal Laurent series in one unknown $t$.

Our main result in this section is the derivation of a generating set for the
ideal $\mathcal{I}_G$.
One motivation for studying this ideal is the constrained formulation
of the MLE problem in Section 4.

The model  in Section 2 corresponds to the 
complete graph on $I$ nodes, denoted $G = K_I$.
In particular, for $I=3$, the ideal $\mathcal{I}_{K_3}$ is the
principal ideal generated by the cubic in (\ref{eq:cubic}).
Here we work with arbitrary graphs $G$, not just $K_I$, so as to allow for 
sparseness in the models.
We disregard the `weighted sum to 0' constraint \eqref{constr},
as this does not affect the homogeneous relations in 
$\mathcal{I}_{G}$.

\smallskip

Let $\EE(G)$ denote the set of oriented edges of $G$.
For each edge $\{i,j\}$ in $E(G)$ there are two edges $ij$ and $ji$ in $\EE(G)$.
 So we have $|\EE(G)|=2|E(G)|$.
An {\em orientation} of $G$ is the choice of a subset $\Oc \subset \EE(G)$ such
 that, for each edge $\{i,j\}$ in $E(G)$, {\it either} $ij$ or $ji$ belongs to $\Oc$. 
 An orientation of $G$ is called {\em acyclic} if it contains no directed cycle. 

Let $C$ denote the undirected $n$-cycle, with $E(C) = \{\{1,2\},\{2,3\}, \ldots,\{n,1\}\}$.
Then $C$ has $2^n$ orientations, shown in
 Figure \ref{fig:spanning trees} for  $n=3$.
 Precisely two of these orientations are  {\em cyclic}.
These two directed cycles are  denoted by $o_C$ and $\bar{o}_C$.
Their edge sets are $\EE(o_C)=\{12,23,\ldots,n1\}$ and $\EE(\bar{o}_C)=\{21,32,\ldots,1n\}$.
Any orientation $\delta_C$ of $C$ defines a monomial of degree $n$ via
 $$ p^{\delta_C} \,\,=\prod_{ij\in\EE(\delta_C)}p_{ij}. $$
 We also define the integer ${\rm c}(\delta_C)=2|\EE(o_C)\cap \EE(\delta_C)|-n$.
Note that ${\rm c}(o_C)=n$ and  ${\rm c}(\bar{o}_C)=-n$.
 
We associate with the $n$-cycle $C$ the following polynomial of degree $n$ with $2^n$ terms:
\begin{equation}
\label{eq:cyclepoly}
P^{C}\,\,=\,\,\sum_{\delta_C}  {\rm coeff}(\delta_C) \cdot p^{\delta_C}.
\end{equation}
The sum is over all orientations $\delta_C$ of $C$, and
the coefficients are the  scalars in $\mathbb{K}$ defined by
\begin{equation*}\label{eq:even}
{\rm coeff}(\delta_{C})\,= \,
\begin{cases}
\frac{{\rm c}(\delta_C)}{|{\rm c}(\delta_C)|} \cdot \big(t^{r-\frac{{|{\rm c}(\delta_C)|}}{2}}+t^{r+2-\frac{{|{\rm c}(\delta_C)|}}{2}}+\cdots+t^{r+\frac{{|{\rm c}(\delta_C)|}}{2}-2} \big)\quad \quad \quad\quad {\rm if}\quad  n=2r,
\smallskip
\\
\frac{{\rm c}(\delta_C)}{|{\rm c}(\delta_C)|} \cdot \big( t^{r-\frac{{|{\rm c}(\delta_C)|-1}}{2}}+t^{r+1-\frac{{|{\rm c}(\delta_C)|-1}}{2}}+\cdots+t^{r+\frac{{|{\rm c}(\delta_C)|-1}}{2}-1}\big) \quad\ {\rm if} \quad n=2r-1.
\end{cases}
\end{equation*}

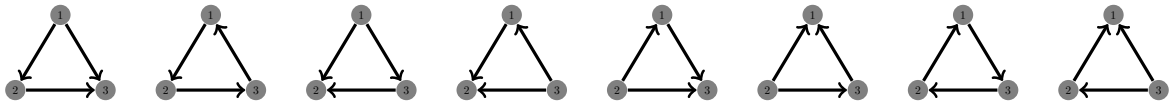
\begin{figure}[h!]
\begin{center}
\begin{tikzpicture} [scale = .20, very thick = 10mm]
\node (n1) at (14,11) [Cgray] {1};
\node (n2) at (11,6)  [Cgray] {2};
\node (n3) at (17,6)  [Cgray] {3};
\foreach \from/\to in {n1/n3,n1/n2,n2/n3}
\draw[black][->] (\from) -- (\to);
  
\node (n1) at (24,11) [Cgray] {1};
\node (n2) at (21,6)  [Cgray] {2};
\node (n3) at (27,6)  [Cgray] {3};
\foreach \from/\to in {n3/n1,n1/n2,n2/n3}
\draw[black][->] (\from) -- (\to);

\node (n1) at (34,11) [Cgray] {1};
\node (n2) at (31,6)  [Cgray] {2};
\node (n3) at (37,6)  [Cgray] {3};
\foreach \from/\to in {n1/n3,n1/n2,n3/n2}
\draw[black][->] (\from) -- (\to);

  \node (n1) at (44,11) [Cgray] {1};
\node (n2) at (41,6)  [Cgray] {2};
\node (n3) at (47,6)  [Cgray] {3};
\foreach \from/\to in {n3/n1,n1/n2,n3/n2}
\draw[black][->] (\from) -- (\to);
------------
\node (n1) at (54,11) [Cgray] {1};
\node (n2) at (51,6)  [Cgray] {2};
\node (n3) at (57,6)  [Cgray] {3};
\foreach \from/\to in {n1/n3,n2/n1,n2/n3}
\draw[black][->] (\from) -- (\to);
 
\node (n1) at (64,11) [Cgray] {1};
\node (n2) at (61,6)  [Cgray] {2};
\node (n3) at (67,6)  [Cgray] {3};
\foreach \from/\to in {n3/n1,n2/n1,n2/n3}
\draw[black][->] (\from) -- (\to);

\node (n1) at (74,11) [Cgray] {1};
\node (n2) at (71,6)  [Cgray] {2};
\node (n3) at (77,6)  [Cgray] {3};
\foreach \from/\to in {n1/n3,n2/n1,n3/n2}
\draw[black][->] (\from) -- (\to);
  
\node (n1) at (84,11) [Cgray] {1};
\node (n2) at (81,6)  [Cgray] {2};
\node (n3) at (87,6)  [Cgray] {3};
\foreach \from/\to in {n3/n1,n2/n1,n3/n2}
\draw[black][->] (\from) -- (\to);
\end{tikzpicture}

\caption{The eight orientations $\delta_1,\delta_2,\ldots,\delta_8$ of $C=K_3$.}
\label{fig:spanning trees}
\vspace{-0.06in}
\end{center}
\end{figure}

\begin{example} \rm
We  consider the cycle $C = K_3$ of length $n=3$.
It has eight orientations, depicted in Figure \ref{fig:spanning trees}.
The corresponding monomials and their coefficients are as follows:

 \begin{center}\begin{tabular}{lll}
$p^{\delta_1}=p_{12}p_{23}p_{13}$ &  \hspace{3ex} ${\rm c}(\delta_1)=1$ &    \hspace{3ex} ${\rm coeff}(\delta_{1})=t$            \\
$p^{\delta_2}=p_{12}p_{23}p_{31}$ &  \hspace{3ex} ${\rm c}(\delta_2)=3$ & \hspace{3ex} ${\rm coeff}(\delta_{2})=1+t+t^2$ \\
$p^{\delta_3}=p_{12}p_{32}p_{13}$ &  \hspace{3ex} ${\rm c}(\delta_3)=-1$ &  \hspace{3ex} $ {\rm coeff}(\delta_{3})=-t$ \\
$p^{\delta_4}=p_{12}p_{32}p_{31}$ &  \hspace{3ex} ${\rm c}(\delta_4)=1$ &    \hspace{3ex} ${\rm coeff}(\delta_{4})=t$ \\
$p^{\delta_5}=p_{21}p_{23}p_{13}$ &  \hspace{3ex} ${\rm c}(\delta_5)=-1$ &  \hspace{3ex} ${\rm coeff}(\delta_{5})=-t$ \\
$p^{\delta_6}=p_{21}p_{23}p_{31}$ &  \hspace{3ex} ${\rm c}(\delta_6)=1$ &   \hspace{3ex} ${\rm coeff}(\delta_{6})=t $\\
$p^{\delta_7}=p_{21}p_{32}p_{13}$ &  \hspace{3ex} ${\rm c}(\delta_7)=-3$ &  \hspace{3ex} $ {\rm coeff}(\delta_{7})=-1-t-t^2$ \\
$p^{\delta_8}=p_{21}p_{32}p_{31}$ &  \hspace{3ex} ${\rm c}(\delta_8)=-1$ &  \hspace{3ex} ${\rm coeff}(\delta_{8})=-t $
\end{tabular}\end{center}
Thus,  the polynomial $P^{C}$ defined in (\ref{eq:cyclepoly}) is the cubic (\ref{eq:cubic}) seen
in Example \ref{ex:running}. \hfill $\diamondsuit$
\end{example}

We define the {\em classical QS model} on the graph $G$ 
by the parametrization (\ref{QS_0})
where $\{i,j\}$ runs over the set $E(G)$ of edges of $G$.
We write $\mathcal{T}_G$ for the ideal of this model.
This is a toric ideal whose Markov basis is
 obtained from the cycle polynomials
$P^C$ by setting $t = 0$:

\begin{lemma} \label{lem:toric}
The ideal $\,\mathcal{T}_G$ has a universal Gr\"obner basis
consisting of the binomials
\begin{equation}
\label{eq:markovbinomial}
\qquad \qquad P^C|_{t=0} \,\, = \,\, p^{o(C)}-p^{\bar{o}(C)} \qquad
\hbox{for all cycles $C$ in $G$.} 
\end{equation}
\end{lemma}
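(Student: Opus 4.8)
The plan is to recognize $\mathcal{T}_G$ as the toric ideal of a graph-homological lattice and then identify the binomials (\ref{eq:markovbinomial}) with its Graver basis. First I would make the monomial structure explicit. Setting $t=0$ in (\ref{QS_t}), or equivalently reparametrizing the classical model (\ref{QS_0}) by the symmetric quantities $x_{ij}=2s_{ij}/(c_i+c_j)=x_{ji}$ and $c_i$, the parametrization becomes the genuine monomial map $p_{ij}\mapsto x_{\{i,j\}}\,c_i$ for $ij\in\EE(G)$, whose kernel is $\mathcal{T}_G$. Writing $A$ for the exponent matrix, with rows indexed by the edge parameters $x_{\{i,j\}}$ and the vertex parameters $c_i$, a vector $w=(w_{ij})_{ij\in\EE(G)}$ lies in $\ker_{\Z}A$ exactly when (a) $w_{ij}+w_{ji}=0$ for every edge $\{i,j\}$ (the $x$-rows) and (b) $\sum_{j:\,ij\in\EE(G)}w_{ij}=0$ for every vertex $i$ (the $c$-rows).

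The key observation is that condition (a) says $w$ is an antisymmetric function on oriented edges, i.e.\ an integer $1$-chain of $G$, while (b) is flow conservation at each vertex. Hence $\ker_{\Z}A$ is canonically the integer cycle space $Z_1(G;\Z)=H_1(G;\Z)$, and under this identification the binomial $p^{o(C)}-p^{\bar{o}(C)}$ corresponds to the elementary unit circulation supported on a simple cycle $C$ (value $+1$ along $o(C)$, value $-1$ along $\bar{o}(C)$, and $0$ off $C$). This already shows these binomials lie in $\mathcal{T}_G$, since every such circulation is in the lattice.

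To upgrade a generating set to a \emph{universal} Gr\"obner basis, I would invoke the standard fact that the Graver basis of any toric ideal is a universal Gr\"obner basis: it contains the reduced Gr\"obner basis for every term order. It therefore suffices to prove that the Graver basis of $\mathcal{T}_G$ consists of exactly the binomials in (\ref{eq:markovbinomial}). Graver elements are the binomials $p^{w^+}-p^{w^-}$ attached to primitive $w\in\ker_{\Z}A$, and under the identification above primitivity is precisely the notion of a conformally minimal nonzero integer circulation. So the entire claim reduces to the purely graph-theoretic statement that the conformally minimal integer circulations of $G$ are exactly the $\pm$ unit flows around simple cycles.

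The crux, and the step I expect to require the most care, is this characterization of primitive circulations. I would prove it through the classical conformal decomposition of integer flows: any nonzero integer circulation $w$ carries, on the support of $w$, a directed closed walk, which reduces to a simple directed cycle $C$ whose unit circulation $z_C$ satisfies $z_C^+\le w^+$ and $z_C^-\le w^-$ componentwise; peeling off $z_C$ and inducting on $\sum_{ij}|w_{ij}|$ writes $w$ as a sign-compatible sum of simple-cycle flows. Consequently any $w$ that is not itself a single simple-cycle unit flow conformally dominates some $z_C$ and fails to be primitive, whereas a simple-cycle unit flow $z_C$ is primitive because any nonzero circulation conformally below it has support inside $C$ and flow values in $\{0,\pm1\}$, hence equals $\pm z_C$. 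This identifies the Graver basis with (\ref{eq:markovbinomial}) and finishes the proof; as a byproduct, simple cycles are exactly the minimal-support elements of $Z_1(G;\Z)$, so these binomials are also the circuits of $\mathcal{T}_G$, confirming that none of them is redundant in (\ref{eq:markovbinomial}).
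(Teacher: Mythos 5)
Your proposal is correct, but it reaches the conclusion by a genuinely different route than the paper. The paper's proof is essentially a citation argument: it quotes the known result that the binomials $p^{o(C)}-p^{\bar{o}(C)}$ form a Markov basis of the quasisymmetry model (Drton--Sturmfels--Sullivant, \S 6.2), then observes that the model matrix is totally unimodular, so that Markov basis, Graver basis and universal Gr\"obner basis all coincide by Propositions 4.11 and 8.11 of Sturmfels' \emph{Gr\"obner Bases and Convex Polytopes}. You instead make everything explicit and self-contained: you exhibit the monomial map $p_{ij}\mapsto x_{\{i,j\}}c_i$, identify $\ker_{\ZZ}A$ with the integer cycle space $Z_1(G;\ZZ)$ (conditions (a) and (b) are exactly antisymmetry and flow conservation, and your reading of the exponent matrix is right), and then compute the Graver basis directly by proving that the conformally minimal circulations are the unit flows on simple directed cycles, via the classical conformal decomposition of integer flows; the standard inclusion of the universal Gr\"obner basis in the Graver basis then finishes the argument. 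In effect you reprove, for this special matrix, both the DSS Markov-basis result and the unimodularity shortcut in one stroke. What the paper's route buys is brevity; what yours buys is a transparent combinatorial picture (kernel $=$ cycle space, Graver elements $=$ simple cycles, and, as you note, these are also the circuits, so the basis has no redundant elements), with no appeal to total unimodularity. One small gap relative to the full statement of the lemma: you never verify the identity $P^C|_{t=0}=p^{o(C)}-p^{\bar{o}(C)}$ itself, i.e.\ that the coefficient ${\rm coeff}(\delta_C)$ has nonzero constant term exactly for the two cyclic orientations, with signs $\pm 1$; the paper also dispatches this as immediate from the definition of ${\rm c}(\delta_C)$, but a complete write-up should include that one-line check before proving that these binomials form a universal Gr\"obner basis of $\mathcal{T}_G$.
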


\begin{proof}
The identity in (\ref{eq:markovbinomial})
is straightforward from the definition of $c(\delta_C)$ and ${\rm coeff}(\delta_C)$.
It was shown in \citet[\S 6.2]{DSS} that
the binomials  $p^{o(C)}-p^{\bar{o}(C)}$ form a Markov basis for $QS$.
Since the underlying model matrix is totally unimodular,
the Markov basis is also a Graver basis, and hence it is
a universal Gr\"obner basis, by
\citet[Propositions 4.11 and 8.11]{GrobnerBasis}.
\end{proof}

\begin{example} \rm
For $I=4$, the model ${\rm QS}_t$ corresponds to the
complete graph $K_4$. This graph has seven 
undirected cycles $C$, four of length $3$ and three of length $4$.
Its defining prime ideal $\mathcal{I}_{K_4}$ is generated by
four cubics and three quartics, all of the form $P^C$. For $t = 0$, we
recover the binomials corresponding to the
seven moves that are listed in \citet[\S 5.4, page 395]{Rap}.
\hfill $\diamondsuit$
\end{example}

This example is explained by the following theorem, which is
 our main result in Section~3.

\begin{theorem} \label{thm:ideal}
The prime ideal $\,\mathcal{I}_G$ of the quasisymmetry model associated with an
undirected graph $G$ is generated by the
cycle polynomials $P^C$ where $C$ runs over all cycles in $G$.
\end{theorem}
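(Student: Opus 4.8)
The plan is to prove the two inclusions $\langle P^C\rangle \subseteq \mathcal{I}_G$ and $\mathcal{I}_G \subseteq \langle P^C\rangle$ by degenerating to the toric ideal $\mathcal{T}_G$ of Lemma~\ref{lem:toric} as $t\to 0$. Throughout I regard $\mathbb{K}=\mathbb{Q}[[t]]$ as a discrete valuation ring with uniformizer $t$, residue field $\mathbb{Q}$ and fraction field $\mathbb{Q}((t))$; note that $1-t$ is a unit in $\mathbb{K}$. I may assume $G$ is connected, since both ideals split over the connected components of $G$ and every cycle lies in a component. It is convenient to replace each generator $P^C$ by
\[
Q^C \;=\; \prod_{i}\bigl(p_{i,i+1}+t\,p_{i+1,i}\bigr)\;-\;\prod_{i}\bigl(t\,p_{i,i+1}+p_{i+1,i}\bigr),
\]
the products running over the edges of a cyclic orientation $o(C)=(1,2,\dots,n)$ of $C$, with indices read modulo $n$. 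Comparing the coefficient of $p^{\delta}$, which equals $t^{(n-c(\delta))/2}-t^{(n+c(\delta))/2}$, with the formula for ${\rm coeff}(\delta)$ shows that $Q^C=(1-t)\,P^C$; since $1-t$ is a unit, $\langle P^C : C\rangle = \langle Q^C : C\rangle=:J$, and moreover $Q^C|_{t=0}=p^{o(C)}-p^{\bar{o}(C)}$.

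First I would establish $J\subseteq \mathcal{I}_G$. Substituting the parametrization (\ref{QS_t}) and writing $\tilde a_i:=a_i+\tfrac{1}{1-t}$, a one-line computation gives $p_{i,i+1}+t\,p_{i+1,i}=(1-t^2)\,x_{i,i+1}\,\tilde a_i$ and $t\,p_{i,i+1}+p_{i+1,i}=(1-t^2)\,x_{i,i+1}\,\tilde a_{i+1}$. Hence both products defining $Q^C$ collapse to $(1-t^2)^n\bigl(\prod_e x_e\bigr)\prod_i\tilde a_i$, so $Q^C$ vanishes on the model and lies in $\mathcal{I}_G$. This proves $J\subseteq\mathcal{I}_G$.

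For the reverse inclusion I would argue by reduction modulo $t$. Both $J$ and $\mathcal{I}_G$ are homogeneous, since the model is a cone under simultaneous scaling of all $p_{ij}$, and $\mathbb{K}[p]/\mathcal{I}_G$ is a subring of the domain $\mathbb{K}[x,a]$, hence a domain, in particular $t$-torsion free and so flat over $\mathbb{K}$. Reducing $J\subseteq\mathcal{I}_G$ modulo $t$ and using $Q^C|_{t=0}=p^{o(C)}-p^{\bar{o}(C)}$ together with Lemma~\ref{lem:toric} gives $\mathcal{T}_G=J_{0}\subseteq(\mathcal{I}_G)_{0}$, where the subscript $0$ denotes the image in $\mathbb{Q}[p]$. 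The key point is to upgrade this to an equality. The reparametrization $\tilde a_i$ turns the edge ratios into the linear conditions $(1+\rho_{ij}t)\,\tilde a_i=(t+\rho_{ij})\,\tilde a_j$ with $\rho_{ij}=p_{ij}/p_{ji}$; along a spanning tree these determine $\tilde a\in\mathbb{P}^{I-1}$ for generic data, while $x_{ij}=p_{ij}/(\tilde a_i-t\tilde a_j)$ is then forced, so the generic fibre of the parametrization is exactly the one-dimensional orbit of the rescaling $\tilde a\mapsto\lambda\tilde a,\ x\mapsto\lambda^{-1}x$. Thus the model has dimension $|E(G)|+I-1$, which coincides with $\dim\mathcal{T}_G$, whose generic fibre is likewise a one-dimensional torus. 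Flatness makes the Hilbert functions of the special and generic fibres agree, so $\dim(\mathcal{I}_G)_0=|E(G)|+I-1=\dim\mathcal{T}_G$; since $\mathcal{T}_G$ is prime and contained in $(\mathcal{I}_G)_0$ of the same dimension, the surjection $\mathbb{Q}[p]/\mathcal{T}_G\twoheadrightarrow\mathbb{Q}[p]/(\mathcal{I}_G)_0$ of finitely generated domains has trivial kernel, whence $(\mathcal{I}_G)_0=\mathcal{T}_G=J_0$.

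It remains to deduce $\mathcal{I}_G=J$ from $(\mathcal{I}_G)_0=J_0$. Given $f\in\mathcal{I}_G$, write $f=g+th$ with $g\in J$; then $th=f-g\in\mathcal{I}_G$, and $t$-torsion freeness of $\mathbb{K}[p]/\mathcal{I}_G$ forces $h\in\mathcal{I}_G$, so $\mathcal{I}_G=J+t\,\mathcal{I}_G$. The graded module $M:=\mathcal{I}_G/J$ therefore satisfies $M=tM$, and in each fixed $p$-degree $d$ the piece $M_d$ is a finitely generated $\mathbb{Q}[[t]]$-module with $M_d=tM_d$; Nakayama's lemma over the local ring $\mathbb{Q}[[t]]$ gives $M_d=0$ for all $d$, i.e.\ $\mathcal{I}_G=J=\langle P^C\rangle$. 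I expect the main obstacle to be the equality of special fibres in the third paragraph: one must rule out that the flat limit at $t=0$ acquires components or embedded behaviour beyond $\mathcal{T}_G$, and this is precisely what the dimension count together with the primality of the toric ideal accomplishes; checking that the generic fibre is exactly one-dimensional is the other place that requires genuine, if short, work.
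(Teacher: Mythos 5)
Your proof is correct, and it takes a genuinely different route from the paper's on both halves of the argument. For the membership $P^C\in\mathcal{I}_G$, the paper runs a lengthy induction on cycle length: it writes the image of $P^C$ under the parametrization as $(1+a_1-ta_n)T_{1n}+(1+a_n-ta_1)T_{n1}$, decomposes $T_{1n}$ and $T_{n1}$ further along the edge $\{1,2\}$, and verifies closed-form expressions for all these partial sums by induction on $n$, with separate even and odd cases. Your substitution $\tilde a_i=a_i+\frac{1}{1-t}$, which turns the parametrization into $p_{ij}=x_{ij}(\tilde a_i-t\tilde a_j)$ and makes both products in your $Q^C$ collapse to the same quantity $(1-t^2)^n\bigl(\prod_e x_e\bigr)\prod_i\tilde a_i$, accomplishes this in three lines and is a substantial simplification. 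One detail to fix: the unit relating $Q^C$ to $P^C$ is $(1-t)$ only for odd cycles; for even cycles the paper's coefficients step by $t^2$, so there $Q^C=(1-t^2)P^C$. (Note also that the paper's printed odd-cycle coefficient formula contains a typo, as it fails to reproduce the paper's own $K_3$ example; your $Q^C$-coefficient $t^{(n-c(\delta))/2}-t^{(n+c(\delta))/2}$ is consistent with the example.) Since $1-t$ and $1-t^2$ are both units of $\mathbb{K}$, this slip affects nothing. For the generation half, the paper is very terse: it cites Lemma \ref{lem:toric} and Nakayama's Lemma, silently identifying the image of $\mathcal{I}_G$ modulo $t$ with $\mathcal{T}_G$; you prove that identification honestly, via $t$-torsion-freeness (flatness over the DVR), equality of Hilbert functions of special and generic fibres, a dimension count for the generic fibre of the parametrization, and primality of $\mathcal{T}_G$, and your degreewise Nakayama over $\mathbb{Q}[[t]]$ is likewise more careful than the paper's (which applies Nakayama to $\mathcal{I}_G$, not finitely generated as a $\mathbb{K}$-module, so the graded version is what is really needed). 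You could, however, have obtained the fibre identification much more cheaply: if $f\in\mathcal{I}_G$, then the identity $f(\ldots,x_{ij}(1+a_i-ta_j),\ldots)=0$ in $\mathbb{K}[x,a]$ specializes at $t=0$ to say that $f|_{t=0}$ vanishes under $p_{ij}\mapsto x_{ij}(1+a_i)$, i.e.\ $f|_{t=0}\in\mathcal{T}_G$; this gives $(\mathcal{I}_G)_0\subseteq\mathcal{T}_G$ directly, with no dimension theory or primality argument. What your heavier route buys in exchange is that the dimension statement and the flatness of the degeneration, which the paper only records separately (in its remark after the theorem and in the proposition on dimension and degree), come out as byproducts of the proof itself.
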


\begin{proof}
We begin by proving that $P^C$ lies in $\mathcal{I}_G$.
The image of $P^C$ under the substitution
 $p_{ij} \mapsto x_{ij} (1+a_i-ta_j)$ can be written
 as $\,Q^C\times \prod_{\{i,j\}\in E(C)} x_{ij}  $, where
 $Q^C$ is a polynomial in $\mathbb{K}[a_1,\ldots,a_n]$.
 Since each term  $p^{\delta_C}$ of $P^C$ is divisible
 by either $p_{1n}$ or $p_{n1}$, we can write
 \begin{equation}
 \label{eq:deco1}  Q^C\,\,=\,\,(1+a_1-ta_n)T_{1n}+(1+a_n-ta_1)T_{n1}.
 \end{equation}
We need to show that $Q^C$ is zero. To do this, we shall
establish the following identities:
$$ \begin{matrix}
& T_{1n}  & = & (-1)^{[\frac{n-1}{2}]+1}(t + 1)^{2r-2}(1+a_n-ta_1)\prod_{i=2}^{n-1}(1+a_i-t a_i) \\
\hbox{and} \quad & T_{n1} & = & (-1)^{[\frac{n-1}{2}]}(t +1)^{2r-2}(1+a_1-ta_n)\prod_{i=2}^{n-1}(1+a_i-t a_i).
\end{matrix} $$
To prove these, we shall use the decompositions
$$ \begin{matrix}
& T_{1n} & = & 
(1+a_1-ta_2)T_{1n,12}  + (1+a_2-ta_1)T_{1n,21}  \\
\hbox{and} \quad & 
T_{n1} & = & 
(1+a_1-ta_2)T_{n1,12}  + (1+a_2-ta_1)T_{n1,21}.  \\
\end{matrix} $$
With this notation, we claim that the following holds for a suitable integer $r$:
\begin{itemize}
\item[(i)]
$T_{1n,12}\,=\,(-1)^{[\frac{n-2}{2}]}t(t +1)^{2r-3}(a_2-a_n)\prod_{i=3}^{n-1}(1+a_i-t a_i)$,
\item[(ii)]
$T_{1n,21} \,=\,(-1)^{[\frac{n-2}{2}]}(t +1)^{2r-3}(t^2a_2-t-a_n-1)\prod_{i=3}^{n-1}(1+a_i-t a_i)$.
\end{itemize}
Let $C'$ be the cycle $2-3-\cdots-n-2$. 
In analogy to (\ref{eq:deco1}), we  write
$$Q^{C'}=(1+a_2-ta_n)S_{2n}+(1+a_n-ta_2)S_{n2}.$$ 
Note that for any orientation $\delta_C$ of $C$ in which $1n$ and $12$ belong to $\EE(\delta_C)$,  we have 
\begin{equation*}
c(\delta_C)\,\, =\,\,
\begin{cases}
c(\delta_{C'})-1 \quad\quad\quad\ {\rm if} \quad n2\in\EE(\delta_{C'}),
\smallskip \\
c(\delta_{C'})+1 \quad\quad\quad\ {\rm if}\quad  2n\in\EE(\delta_{C'}).&\\
\end{cases}
\end{equation*}
Also note that $\frac{{\rm c}(\delta_C)}{|{\rm c}(\delta_C)|}=\frac{{\rm c}(\delta_{C'})}{|{\rm c}(\delta_{C'})|}$.
In order to prove (i) we  consider the following two cases:
\medskip

\noindent
{\bf Case 1.} $n = 2r-1$ is an odd number: 
We claim that $T_{1n,12}=t(S_{n2}+S_{2n})$. 
Note that $C'$ is an even cycle with $n-1=2(r-1)$.
The coefficient for $\delta_C$ can be written as 
\begin{eqnarray*}
t\times \frac{{\rm c}(\delta_C)}{|{\rm c}(\delta_C)|}\big((t^{r-1-\frac{{|{\rm c}(\delta_C)|-1}}{2}}+t^{r+1-\frac{{|{\rm c}(\delta_C)|-1}}{2}}+\cdots+t^{r+\frac{{|{\rm c}(\delta_C)|-1}}{2}-2})
&+&\\ (t^{r-\frac{{|{\rm c}(\delta_C)|-1}}{2}}+t^{r+2-\frac{{|{\rm c}(\delta_C)|-1}}{2}}+\cdots+t^{r+\frac{{|{\rm c}(\delta_C)|-1}}{2}-3})\big) .
\end{eqnarray*}
The first summand corresponds to the orientation $\delta_{C'}$ with $n2\in \EE(\delta_{C'})$.
The second summand corresponds to  the orientation $\delta_{C'}$ with  $2n\in \EE(\delta_{C'})$.
By induction on $n$, we have 
$$ 
\begin{matrix}
& S_{2n} & = & (-1)^{[\frac{n-2}{2}]+1}(t + 1)^{2r-4}(1+a_n-ta_2)\prod_{i=3}^{n-1}(1+a_i-t a_i) ,\\
\hbox{and} \quad & 
S_{n2} & = & (-1)^{[\frac{n-2}{2}]}(t +1)^{2r-4}(1+a_2-ta_n)\prod_{i=3}^{n-1}(1+a_i-t a_i) .
\end{matrix}
$$
Since  $-(1+a_n-ta_2)+(1+a_2-ta_n)=(1+t)(a_2-a_n)$, the claim (i) holds for $n$ odd.

\medskip

\noindent
{\bf Case 2.} $n = 2r$ is an even number: We 
will first show that $T_{1n,12}=t(S_{n2}+S_{2n})/(1+t)^2$. 
Here $C'$ is an odd cycle on $n-1=2r-1$ vertices.
The coefficient for $\delta_C$ equals
\begin{eqnarray*}
\frac{t}{(1+t)^2}\times\frac{{\rm c}(\delta_C)}{|{\rm c}(\delta_C)|}
\big(t^{r-\frac{|{\rm c}(\delta_C)|}{2}-1}+2t^{r-\frac{|{\rm c}(\delta_C)|}{2}}+\cdots+2t^{r+\frac{|{\rm c}(\delta_C)|}{2}-2}+t^{r+\frac{|{\rm c}(\delta_C)|}{2}-1}\big).
\end{eqnarray*}
This sum can be decomposed as
\[
(t^{r-\frac{|{\rm c}(\delta_C)|}{2}-1}+t^{r-\frac{|{\rm c}(\delta_C)|}{2}}+\cdots+t^{r+\frac{|{\rm c}(\delta_C)|}{2}-1})\\+
(t^{r-\frac{|{\rm c}(\delta_C)|}{2}}+t^{r-\frac{|{\rm c}(\delta_C)|}{2}+1}+\cdots+t^{r+\frac{|{\rm c}(\delta_C)|}{2}-2}),
\]
where the first summand corresponds to the orientation $\delta_{C'}$ with $n2\in\EE(\delta_{C'})$, and the second summand corresponds to  the orientation $\delta_{C'}$ with $2n\in\EE(\delta_{C'})$. Therefore $T_{1n,12}=\frac{t(S_{n2}+S_{2n})}{(1+t)^2}$.
By induction on $n$, we have
$$
\begin{matrix}
& S_{2n} & = & (-1)^{[\frac{n-2}{2}]+1}(t + 1)^{2r-2}(1+a_n-ta_2)\prod_{i=3}^{n-1}(1+a_i-t a_i) \\
\hbox{and} \quad &
S_{n2} & = & (-1)^{[\frac{n-2}{2}]}(t +1)^{2r-2}(1+a_2-ta_n)\prod_{i=3}^{n-1}(1+a_i-t a_i)
\end{matrix}
$$
Since  $-(1+a_n-ta_2)+(1+a_2-ta_n)=(1+t)(a_2-a_n)$, the result holds for even $n$ as well.

\smallskip

By a similar argument one can  prove (ii).
Now applying (i) and (ii) and the equality 
$$
-(1+a_2-ta_2)(1+a_n-ta_1)(1+t)=(1+a_1-ta_2)(a_2-a_n)t+(1+a_2-ta_1)(t^2a_2-t-a_n-1),
$$
we obtain
\[
T_{1n}\,\,= \,\,(-1)^{[\frac{n-2}{2}]+1}(t +1)^{2r-2}(1+a_n-ta_1)\prod_{i=2}^{n-1}(1+a_i-t a_i)\ .
\]
The identity for $T_{n1}$ is analogous. It follows that $P^C \in \mathcal{I}_G$
for all cycles of $G$.

It remains to be shown that the $P^C$  generate the homogeneous ideal $\mathcal{I}_G$.
Recall that, by Lemma \ref{lem:toric}, the
images of the $P^C$ generate this ideal after we tensor, 
over the local ring $\mathbb{K}$, with the residue field $\mathbb{Q} = \mathbb{K}/\langle t \rangle $.
 Hence, by Nakayama's Lemma, the $P^C$ generate $\mathcal{I}_G$.
\end{proof}

\begin{remark} \rm
In  Theorem \ref{thm:ideal}
we can replace the local ring $\mathbb{K}= \QQ[[t]]$
with the polynomial ring $\QQ[t]$
because no $t$ appears in the leading forms
$(P^C)|_{t=0}$. This ensures that 
$\QQ[t][p_{ij}]$ modulo the ideal $\langle P^C: C\, \hbox{cycle in}\, G \rangle$
is  torsion-free,  hence free, and therefore flat over $\QQ[t]$.
\end{remark}

In statistical applications, the quantity $t$ will always
take on a particular real value. In the remainder of this paper,
we assume $t \in \RR$, and we  identify
$\mathcal{I}_G$ with its image in $\RR[p_{ij}]$.

\begin{corollary}
For any $t \in \RR$, the cycle polynomials $P^C$
generate  the ideal $\,\mathcal{I}_G$ in $\RR[p_{ij}]$.
\end{corollary}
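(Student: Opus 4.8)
The plan is to read the specialization $t\mapsto t_0$ as a base change and to control it by the flatness recorded in the Remark. Put $R=\QQ[t]$, $S=R[p_{ij}]$ and $J=\langle P^C : C\ \text{a cycle of}\ G\rangle$. By Theorem~\ref{thm:ideal} together with the Remark we have $\mathcal{I}_G=J$, and $S/J$ is graded-free, hence flat, over $R$. Fix $t_0\in\RR$ and let $\pi\colon R\to\RR$, $t\mapsto t_0$, be evaluation. First I would tensor $0\to J\to S\to S/J\to 0$ with $\RR$ over $R$: since $\Tor_1^R(S/J,\RR)=0$ by flatness, the sequence stays exact, so the image of $J$ in $\RR[p_{ij}]$ is precisely $\bar J:=\langle P^C|_{t=t_0}\rangle$ and $\RR[p_{ij}]/\bar J\cong (S/J)\otimes_R\RR$. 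Graded-freeness then makes the Hilbert function of $\RR[p_{ij}]/\bar J$ independent of $t_0$; comparing with the fiber at $t=0$ and using Lemma~\ref{lem:toric}, it equals that of the toric ring $\QQ[p_{ij}]/\mathcal{T}_G$.

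Next I would match $\bar J$ with the honest prime ideal of the model. Define $\phi_{t_0}\colon\RR[p_{ij}]\to\RR[x_{ij},a_i]$ by $p_{ij}\mapsto x_{ij}(1+a_i-t_0 a_j)$ and set $\mathcal{I}_G^{t_0}:=\ker\phi_{t_0}$. One inclusion is free of charge: the vanishing $Q^C=0$ proved inside Theorem~\ref{thm:ideal} is a polynomial identity over $R$, so it survives $\pi$, giving $\bar J\subseteq\mathcal{I}_G^{t_0}$. Hence the Hilbert function of $\RR[p_{ij}]/\mathcal{I}_G^{t_0}$ is at most that of $\RR[p_{ij}]/\bar J$, i.e.\ at most that of $\QQ[p_{ij}]/\mathcal{T}_G$. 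Since $\mathcal{I}_G^{t_0}$ is prime (a kernel into a domain), the corollary is now equivalent to the reverse inequality: an inclusion $\bar J\subseteq\mathcal{I}_G^{t_0}$ between ideals with equal Hilbert functions is forced to be an equality. Put differently, I must show that the flat family $S/J$ has integral fibers over \emph{every} real point of $\Spec R$, not merely at $t=0$ and at the generic point.

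This last point is where I expect the real difficulty. The graded ranks of $\phi_{t_0}$ are lower semicontinuous in $t_0$ and attain the value $\dim\bigl(\QQ[p_{ij}]/\mathcal{T}_G\bigr)$ generically, so a failure could occur only on a finite set of real parameters, where the image of the parametrization drops dimension or $\bar J$ becomes nonreduced. To exclude this I would use that the fiber at $t=0$ is the integral toric variety $V(\mathcal{T}_G)$ (Lemma~\ref{lem:toric}), whence integrality of the fibers is an open condition on $\Spec R$, and then pin down the finitely many remaining real values from the coefficient structure of \eqref{eq:cyclepoly}: the two cyclic orientations $o(C),\bar o(C)$ always contribute $\pm(1+t+\cdots+t^{\,n-1})$, which is nonzero for every $t\ge 0$. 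This keeps the binomial leading part $p^{o(C)}-p^{\bar o(C)}$ visible in $P^C|_{t=t_0}$, so the toric degeneration, and with it the irreducibility of the fiber, persists; the two Hilbert functions then coincide and $\bar J=\mathcal{I}_G^{t_0}$, which is the claim. The only real value needing a separate check by hand is $t=-1$ for graphs with an even cycle, where this extremal coefficient vanishes.
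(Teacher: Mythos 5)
Your first paragraph is, in substance, the paper's own argument, and it already finishes the job. By the Remark, $S/J$ with $J=\langle P^C\rangle$ is graded-free, hence flat, over $R=\QQ[t]$, and the paper \emph{defines} $\mathcal{I}_G$ for a real value $t_0$ as the image of the ideal of Theorem~\ref{thm:ideal} under the specialization $t\mapsto t_0$ (see the sentence immediately preceding the Corollary). The image of an ideal under this specialization is the ideal generated by the images of any generating set, so $\mathcal{I}_G=\langle P^C|_{t=t_0}\rangle$ at once; the $\Tor$ and Hilbert-function bookkeeping you do is correct but not needed for this. Where your proposal goes astray is the second paragraph, when you declare the Corollary ``equivalent'' to the identification $\bar J=\ker\phi_{t_0}$ for \emph{every} real $t_0$. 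That is a strictly stronger statement than the Corollary, and it is false: at $t_0=-1$ the parametrization reads $p_{ij}=x_{ij}(1+a_i+a_j)$, which is symmetric in $i,j$ because $x_{ij}=x_{ji}$, so $\ker\phi_{-1}$ contains the linear forms $p_{ij}-p_{ji}$ for every edge of $G$, while $\bar J$ is homogeneous and generated in degrees $\geq 3$ (cycle lengths). Hence $\bar J\subsetneq\ker\phi_{-1}$ for every graph with at least one edge. Your diagnosis of $t=-1$ is also off: you attribute the trouble to the vanishing of the extremal coefficient $1+t+\cdots+t^{n-1}$, hence only to graphs with an even cycle, but for $C=K_3$ this coefficient equals $1$ at $t=-1$ and the kernel jumps anyway. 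The failure is a degeneration of the parametrization, not of the coefficients of $P^C$.

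Even on the range where the kernel statement is presumably true (say $t\in[0,1]$), your argument for it has a genuine gap. Semicontinuity runs the wrong way: from $\bar J\subseteq\ker\phi_{t_0}$ and constancy of the Hilbert function of $\RR[p_{ij}]/\bar J$ you only get that the Hilbert function of $\RR[p_{ij}]/\ker\phi_{t_0}$ is at most the toric one, and what must be excluded is a strict drop (equivalently, a shrinking of the image closure), which is exactly what happens at $t=-1$. Openness of geometric integrality of fibers gives an open good locus containing $t=0$, but nothing tells you this locus contains all of $[0,1]$. Finally, the claim that nonvanishing of the extremal coefficient keeps ``the binomial leading part visible'' so that ``the toric degeneration persists'' is not a valid inference: for fixed $t_0\neq 0$ there is no term order under which the initial ideal of $\bar J$ is automatically $\mathcal{M}_G$ — the $\prec$-leading term of $P^C|_{t=t_0}$ need not be $p^{o_C}$ among its $2^n$ monomials, and in any case one must control the initial ideal of the whole fiber ideal, not just of its given generators. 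So: for the Corollary as the paper states it, stop after your first paragraph; for the stronger kernel identification, you must restrict the parameter range and supply a real argument that $\ker\phi_{t_0}$ has the toric Hilbert function (for instance, by showing the specialized parametrization still has image of dimension $|E(G)|+I-1$ and degree equal to the number of spanning trees), which the proposal does not do.
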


Theorem \ref{thm:ideal} furnishes a  (flat)
degeneration from $\mathcal{I}_G$ to
the toric ideal $\mathcal{T}_G$. 
Geometrically, we view this as a
degeneration of varieties (or semialgebraic sets) from
$t > 0$ to $t = 0$.  Lemma \ref{lem:toric} concerns 
 further degenerations from the toric ideal $\mathcal{T}_G$ to its
 initial monomial ideals
 $\mathcal{M}_G$. Any such $\mathcal{M}_G$ is
 squarefree and serves as a combinatorial model for
both $\mathcal{T}_G$ and $\mathcal{I}_G$.

We describe one particular choice and draw some
combinatorial conclusions.
Fix a term order on $\RR[p_{ij}]$ with the property that
$p_{ij} \succ p_{k\ell}$ whenever $i < k$, or $i=k$ and $j<\ell$.
For any cycle $C$, we label  the two directed orientations $o_C$ and $\bar{o}_C$ so that
$p^{o(C)} \succ p^{\bar{o}(C)} $.
 Fix a spanning tree $T$ of $G$. Let
  $\mathfrak{P}_T$ denote the monomial prime ideal generated by all unknowns $p_{ij}$ where 
$\{i,j\}\in E(G)\backslash E(T)$ and  $p_{ij}$ divides $p^{o_C}$, where $C$ is the unique cycle
in $E(T) \cup \{ \{i,j\}\}$.~The squarefree monomial ideal
\begin{equation}
\label{eq:initideal}
 \mathcal{M}_G \,\,  =\,\,
{\rm in}_\succ(\mathcal{T}_G) \,\, = \,\,
\bigl\langle \,p^{o_C} \,: \,C \,\,\hbox{cycle in} \,\, G \,\bigr\rangle \,\, = \,\,
\bigcap_T \mathfrak{P}_T\ ,
\end{equation}
is obtained by taking the
intersection  over all spanning trees $T$ of $G$.
The simplicial complex with {\em Stanley-Reisner ideal} $\mathcal{M}_G$ is
a regular triangulation of the {\em Lawrence polytope}
of the graph $G$. This triangulation is shellable
and hence our ideals are Cohen-Macaulay.
We record the following fact.

\begin{proposition}
The ideals $\,\mathcal{M}_G, \,\mathcal{T}_G\,$ and $\,\mathcal{I}_G\,$
define varieties of dimension $|E(G)|+I-1$ in affine space, and their 
common degree is the number of spanning trees of the graph $G$.
\end{proposition}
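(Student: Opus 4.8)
The plan is to transport both invariants to the squarefree monomial ideal $\mathcal{M}_G$, where they become purely combinatorial, using the two flat degenerations the paper has already set up. Passing to an initial ideal preserves the Hilbert function, so $\RR[p]/\mathcal{T}_G$ and $\RR[p]/\mathcal{M}_G = \RR[p]/\ini_\succ(\mathcal{T}_G)$ share one Hilbert function by Lemma \ref{lem:toric}. On the other side, the Remark following Theorem \ref{thm:ideal} shows that $\QQ[t][p_{ij}]/\langle P^C\rangle$ is free, hence flat, over $\QQ[t]$; since $\langle P^C\rangle$ is homogeneous in the $p$-variables, each graded piece is a free $\QQ[t]$-module, so its rank—the value of the Hilbert function in that degree—is the same for every fiber, in particular for the fiber at $t=0$ (which is $\QQ[p]/\mathcal{T}_G$) and for a fiber at real $t$ (which is $\QQ[p]/\mathcal{I}_G$, by the Corollary). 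Hence all three rings have a common Hilbert function, and it suffices to compute $\dim$ and $\deg$ for $\mathcal{M}_G$.

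Both numbers I would read straight off the intersection $\mathcal{M}_G = \bigcap_T \mathfrak{P}_T$ of (\ref{eq:initideal}). For the dimension, a spanning tree of the connected graph $G$ has $I-1$ edges, leaving $|E(G)|-(I-1)$ non-tree edges; since $\mathfrak{P}_T$ is generated by exactly one variable per non-tree edge, it has $\height(\mathfrak{P}_T) = |E(G)|-I+1$. The ambient ring has $2|E(G)|$ variables, so $\dim(\RR[p]/\mathfrak{P}_T) = |E(G)|+I-1$ for every $T$. Thus $\mathcal{M}_G$ is equidimensional of dimension $|E(G)|+I-1$, and this is the common dimension of the three varieties.

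For the degree, I would use that $\mathcal{M}_G$ is the Stanley-Reisner ideal of a pure complex $\Delta$—pure by the equidimensionality just shown, Cohen-Macaulay as recorded in the text—so its degree equals the number of facets of $\Delta$, i.e.\ the number of minimal primes of $\mathcal{M}_G$. Because all the $\mathfrak{P}_T$ have the same height, no one of them contains another, so each is a minimal prime and the decomposition (\ref{eq:initideal}) is irredundant; the number of minimal primes is therefore the number of \emph{distinct} ideals $\mathfrak{P}_T$. Finally, $T \mapsto \mathfrak{P}_T$ is injective: the generators of $\mathfrak{P}_T$ are variables $p_{ij}$ whose underlying unoriented edges $\{i,j\}$ are precisely the non-tree edges $E(G)\setminus E(T)$, so $E(T)$, and hence $T$, is recoverable from $\mathfrak{P}_T$. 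Counting facets then gives $\deg(\mathcal{M}_G) = \deg(\mathcal{T}_G) = \deg(\mathcal{I}_G) =$ the number of spanning trees of $G$.

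Most of this is assembly of standard facts—invariance of dimension and degree under flat and Gr\"obner degenerations, and the identity (degree of a pure Stanley-Reisner ring) $=$ (number of facets). The one point that needs care, and the likely main obstacle, is pinning down the bijection between facets and spanning trees rather than a one-sided bound: one must verify both that the $\mathfrak{P}_T$ are exactly the minimal primes (handled by the equal-height argument giving irredundancy) and that distinct trees give distinct primes (handled by the edge-recovery observation). With these in place, no volume or shelling computation beyond what the text already supplies is required.
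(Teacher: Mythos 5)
Your proposal is correct and takes essentially the same route as the paper: the paper's entire proof is the single observation that each component $\mathfrak{P}_T$ in (\ref{eq:initideal}) has codimension $|E(G)\setminus E(T)| = |E(G)|-I+1$, leaving implicit exactly the points you spell out (invariance of the Hilbert function under the Gr\"obner and flat degenerations, irredundancy of the decomposition, and the bijection $T \mapsto \mathfrak{P}_T$ with spanning trees). Your write-up is a faithful, fully detailed version of the paper's intended argument, with no gaps.
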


\begin{proof}
Each of the components $\mathfrak{P}_T$ in (\ref{eq:initideal}) has
codimension $|E(G)\backslash E(T)| = |E(G)| {-} I {+} 1$.
\end{proof}

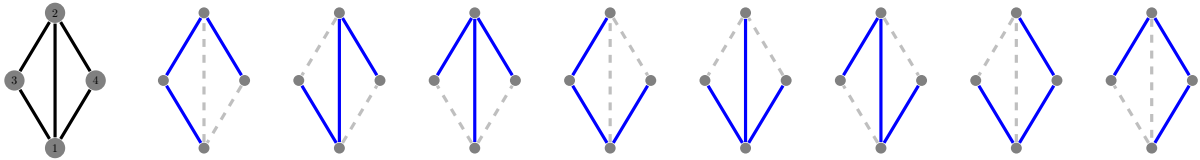
\begin{figure}[h]
\begin{center}
\begin{tikzpicture} [scale = .18, very thick = 10mm]

  \node (n4) at (3,1)  [Cgray] {1};
  \node (n1) at (3,11) [Cgray] {2};
  \node (n2) at (0,6)  [Cgray] {3};
  \node (n3) at (6,6)  [Cgray] {4};
  \foreach \from/\to in {n4/n2,n1/n3}
    \draw[] (\from) -- (\to);
\foreach \from/\to in {n2/n1,n4/n3,n1/n4}
    \draw[] (\from) -- (\to);
    
  \node (n4) at (14,1)  [Cgray] {};
  \node (n1) at (14,11) [Cgray] {};
  \node (n2) at (11,6)  [Cgray] {};
  \node (n3) at (17,6)  [Cgray] {};

\foreach \from/\to in {n2/n1,n1/n3,n4/n2}
    \draw[blue][] (\from) -- (\to);s
 
\foreach \from/\to in {n4/n1,n3/n4}
    \draw[lightgray][dashed] (\from) -- (\to);

    \node (n4) at (24,1)  [Cgray] {};
  \node (n1) at (24,11) [Cgray] {};
  \node (n2) at (21,6)  [Cgray] {};
  \node (n3) at (27,6)  [Cgray] {};
  \foreach \from/\to in {n4/n2,n1/n3,n4/n1}
   \draw[blue][] (\from) -- (\to);

     \foreach \from/\to in {n1/n2,n4/n3}
    \draw[lightgray][dashed] (\from) -- (\to);

    \node (n4) at (34,1)  [Cgray] {};
  \node (n1) at (34,11) [Cgray] {};
  \node (n2) at (31,6)  [Cgray] {};
  \node (n3) at (37,6)  [Cgray] {};
  \foreach \from/\to in {n4/n2,n4/n3}
    \draw[lightgray][dashed] (\from) -- (\to);

    \foreach \from/\to in {n4/n1,n1/n3,n1/n2}
    \draw[blue][] (\from) -- (\to);

  \node (n4) at (44,1)  [Cgray] {};
  \node (n1) at (44,11) [Cgray] {};
  \node (n2) at (41,6)  [Cgray] {};
  \node (n3) at (47,6)  [Cgray] {};
   \foreach \from/\to in {n4/n1,n1/n3}
    \draw[lightgray][dashed] (\from) -- (\to);

    \foreach \from/\to in {n2/n1,n4/n2,n4/n3}
    \draw[blue][] (\from) -- (\to);

    \node (n4) at (54,1)  [Cgray] {};
  \node (n1) at (54,11) [Cgray] {};
  \node (n2) at (51,6)  [Cgray] {};
  \node (n3) at (57,6)  [Cgray] {};
 \foreach \from/\to in {n1/n2,n3/n1}
    \draw[lightgray][dashed] (\from) -- (\to);

    \foreach \from/\to in {n4/n2,n4/n3,n4/n1}
    \draw[blue][] (\from) -- (\to);

    \node (n4) at (64,1)  [Cgray] {};
  \node (n1) at (64,11) [Cgray] {};
  \node (n2) at (61,6)  [Cgray] {};
  \node (n3) at (67,6)  [Cgray] {};
 \foreach \from/\to in {n1/n3,n4/n2}
    \draw[lightgray][dashed] (\from) -- (\to);

    \foreach \from/\to in {n4/n3,n4/n1,n1/n2}
    \draw[blue][] (\from) -- (\to);


   \node (n4) at (74,1)  [Cgray] {};
  \node (n1) at (74,11) [Cgray] {};
  \node (n2) at (71,6)  [Cgray] {};
  \node (n3) at (77,6)  [Cgray] {};
 \foreach \from/\to in {n1/n2,n4/n1}
    \draw[lightgray][dashed] (\from) -- (\to);

    \foreach \from/\to in {n4/n3,n4/n2,n3/n1}
    \draw[blue][] (\from) -- (\to);


 \node (n4) at (84,1)  [Cgray] {};
  \node (n1) at (84,11) [Cgray] {};
  \node (n2) at (81,6)  [Cgray] {};
  \node (n3) at (87,6)  [Cgray] {};
 \foreach \from/\to in {n4/n2,n1/n4}
    \draw[lightgray][dashed] (\from) -- (\to);

    \foreach \from/\to in {n4/n3,n3/n1,n1/n2}
    \draw[blue][] (\from) -- (\to);

\end{tikzpicture}

\caption{A graph $G$ on $I=4$ nodes and its eight spanning trees $T$}
\label{fig:graph48}
\vspace{-0.08in}
\end{center}
\end{figure}

\begin{example}\label{exam:1} \rm
Consider the graph $G$ depicted in Figure~\ref{fig:graph48}.
The associated toric ideal equals
\[
\mathcal{T}_G \,\,\,= \,\,\, \langle  \,
\underline{p_{12}p_{23}p_{31}}-p_{21}p_{32}p_{13} \,,\  
\underline{p_{12}p_{24}p_{41}}-p_{21}p_{42}p_{14} \,,\  
\underline{p_{13}p_{32} p_{24}p_{41}}-p_{31}p_{23} p_{42}p_{14}\,
\rangle.
\]
This has codimension $2$ and degree $8$.
Its (underlined) initial monomial ideal $\mathcal{M}_G$ equals
$$
\langle p_{12},p_{13}  \rangle \,\cap\,
\langle p_{12},p_{32}  \rangle \,\cap\,
\langle p_{12},p_{24}  \rangle \,\cap\,
\langle p_{12},p_{41}  \rangle \,\cap\,
\langle p_{23},p_{41}  \rangle \,\cap\,
\langle p_{23},p_{24}  \rangle \,\cap\,
\langle p_{24},p_{31}  \rangle \,\cap\,
\langle p_{31},p_{41}  \rangle.
$$
These eight monomial prime ideals correspond to the eight spanning trees 
 in Figure~\ref{fig:graph48}.
 The ideal $\mathcal{I}_G$ has three generators, two cubics
 with $8$ terms and one quartic with $16$ terms, as in
 (\ref{eq:cyclepoly}). These are obtained from the Markov basis
 of $\mathcal{T}_G$ by adding additional terms that are divisible by $t$.
  \hfill $\diamondsuit$
\end{example}

\section {Maximum Likelihood Estimation}

 A data table ${\bf n} = (n_{ij})$
 of format $I \times I$  can arise either by multinomial sampling or 
 by sampling from $I^2$ independent Poisson
 distributions, one for each of its cells. 
 In both cases, the log-likelihood function, up to an additive constant, is equal to
 \begin{equation}
\label{eq:loglike}
 \ell_{\bf n}({\bf p}) \quad = \quad \sum_{i=1}^I \sum_{j=1}^I n_{ij} \cdot {\rm log}(p_{ij}). 
 \end{equation}
 Maximum likelihood estimation (MLE) is the problem of
maximizing $\ell_{\bf n}$ over all probability tables ${\bf p} = (p_{ij})$
in the model of interest. For us, that model is 
the quasisymmetry model $(QS_t)$,
where $t$ is a fixed constant in
the  interval $[0,1]$.
This optimization problem can be expressed
in either  constrained form or in  unconstrained form.
The  {\em constrained MLE problem} is written~as
\begin{equation}
\label{eq:constrainedMLE}
 \hbox{Maximize} \,\, \,\ell_{\bf n}({\bf p}) \quad \hbox{subject to} \quad
{\bf p} \,\in \, V(\mathcal{I}_{G} ) \cap \Delta_{I^2-1}, 
\end{equation}
where $G = K_I$ is the complete graph on $I$ nodes,
and $V(\mathcal{I}_G)$ is the zero set of the
cycle polynomials $P^C$ constructed in Section 3.
The {\em unconstrained MLE problem} is written as
\begin{equation}
\label{eq:unconstrainedMLE}
 \hbox{Maximize} \,\, \,\ell_{\bf n}({\bf a} , {\bf s}).
  \end{equation}
 The decision variables in (\ref{eq:unconstrainedMLE}) are the vector ${\bf a} = (a_1,\ldots,a_I)$ and the
symmetric probability matrix ${\bf s} = (s_{ij})$. The
    objective function in (\ref{eq:unconstrainedMLE}) is obtained by substituting
(\ref{QS_t_a}) into (\ref{eq:loglike}). We shall discuss both formulations,
starting with a simple numerical example for the  formulation (\ref{eq:constrainedMLE}).

\begin{example}  
\label{ex:primenumbers} \rm
Let $I = 3, \,t = 2/3$ and consider the data table
$$  \qquad \qquad {\bf n} \,= \begin{bmatrix}
2 & 3 & 5 \\
11 & 13 & 17 \\
19 & 23 & 29 \end{bmatrix} \qquad
\hbox{ with sample size $\,n_{++} = 122$}.
$$
Our aim is to maximize $\ell_{\bf n}({\bf p})$ subject to
the cubic equation (\ref{eq:cubic}) and
$p_{11} + p_{12} + \cdots + p_{33} = 1$.
Using Lagrange multipliers for these two constraints, we derive
the {\em likelihood equations} by way of \citet[Algorithm 2.29]{DSS}.
These polynomial equations in the nine unknowns  $p_{ij}$
have $15$ complex solutions. Two of the complex solutions
are non-real. Of the $13$ real solutions, 
$12$ have at least one negative coordinate.
Only one solution lies 
 in the probability simplex  $\Delta_8$:
\begin{equation}
\label{eq:pvalues}
\begin{matrix}
\hat p_{11} &=& 1/61, &  \qquad
\hat p_{12} &=& 0.0286294, & \qquad
\hat   p_{13} &=& 0.0376289, \\
\hat p_{21} &=& 0.0861247,  &  \qquad
\hat p_{22} &=& 13/122,   & \qquad
\hat   p_{23} &=& 0.1446119, \\
\hat   p_{31} &=& 0.1590924, &  \qquad
\hat   p_{32} &=& 0.1832569, & \qquad
\hat p_{33} &=& 29/122.
\end{matrix}      
\end{equation}
This is the global maximum of the constrained MLE problem
for this instance.
\hfill $\diamondsuit$
\end{example}

The benefit of the constrained formulation is that
we can take advantage of the combinatorial results in Section 3, and
we do not have to deal with issues of identifiability
and singularities arising from the map
(\ref{QS_t_a}).  On the other hand, most statisticians
would prefer  the unconstrained formulation because
this corresponds more directly to the fitting of model parameters to data.

To solve the unconstrained MLE problem (\ref{eq:unconstrainedMLE}),
we take the partial derivations of the objective function 
$\ell_n({\bf a},{\bf s})$ with respect to all model parameters
$a_i$ and $s_{ij}$. The resulting system of equations decouples
into a system for ${\bf a}$ and a system for ${\bf s}$.
The latter is trivial to solve.
Using the requirement that
 the entries of ${\bf s}$ sum to $1$,
it has the closed form solution
\begin{equation}
\label{eq:gets}
 \hat s_{ij} \,\,= \,\,\frac{n_{ij}+n_{ji}}{2n_{++}}  , \quad i,j=1, \ldots, I . 
 \end{equation}
After dividing by $1+t$,
the partial derivatives
of $\ell_{\bf n}({\bf a},{\bf s})$ with respect to $a_1,a_2,\ldots,a_I$  are
\begin{equation}\label{eq:scoreeqn}
\sum_{j =1 \atop j \not= i }^I \frac{
(1+a_j-ta_j)[n_{ij}(1+a_j-ta_i)-n_{ji}(1+a_i-ta_j)]
}{
(1+a_i-t a_j)(1+a_j-ta_i)[2+(1-t)(a_i+a_j)]}
\qquad \hbox{for} \,\, i = 1,2,\ldots,I.
\end{equation}
This system of equations has infinitely many solutions,
because the model ${\rm QS}_t$ is not identifiable.
The general fiber of the map (\ref{QS_t_a}) is a line in
${\bf a}$-space.
Hence only $I-1$ of the $I$ parameters $a_i$
can be estimated. One way to fix this is to simply add the constraint
$\hat a_I = 0$.

\begin{example} 
\label{ex:primenumbers2} \rm
Let us return to the numerical instance in
 Example \ref{ex:primenumbers}. Here we have
\begin{equation}
\label{eq:svalues}
\hat s_{11} = 1/61, \,\,
\hat s_{12} = 7/122, \,\,
\hat s_{13} = 6/61, \,\,
\hat s_{22} = 13/122, \,\, 
\hat s_{23} = 10/61,  \,\,
\hat s_{33} = 29/122.
\end{equation}
The equations (\ref{eq:scoreeqn}) can be solved
in a computer algebra system by clearing denominators and then
saturating the ideal of numerators with respect to those denominators.
As before, there are precisely $15$ complex solutions, of which $13$ are real.
The MLE is given by
\begin{equation}
\label{eq:avalues}
 \hat a_1 = -0.65948848999731861332,\,\,
      \hat a_2 = -0.13818331109451658084,\,\,
 \hat a_3 = 0. 
 \end{equation}
 These are floating point approximations to
 algebraic numbers of degree $15$ over $\mathbb{Q}$.
 An exact representation is given by their minimal
 polynomials. For the first coordinate, this is
$$  \begin{small}
  \begin{matrix}
   62031304 a_1^{15}+2201861910 a_1^{14}
   +30829909776 a_1^{13}+206135547000 a_1^{12}+ 
      528436383696 a_1^{11} \\
      -1126661553720 a_1^{10}
      -9740892273264 a_1^9-4305524252579 a_1^8+ 
      26533957305582 a_1^7 \\ +88281552626154 a_1^6 
      +44254830057030 a_1^5-76332701171853 a_1^4 - 
      83490498412056 a_1^3 \\ +1857597611688 a_1^2
      +29825005557312 a_1+9354112703280 \qquad = \,\, 0.
      \end{matrix}
 \end{small}
 $$
 With this, the second coordinate $\hat a_2$ is a 
 certain rational expression in
$\mathbb{Q}(\hat a_1)$.
By plugging (\ref{eq:svalues}) and (\ref{eq:avalues}) 
into (\ref{QS_t_a}) with $t = 2/3$, we recover the
estimated probability table in (\ref{eq:pvalues}).
\hfill $\diamondsuit$
\end{example}

For larger cases, solutions to the likelihood equations (\ref{eq:scoreeqn})
are computed by iterative numerical methods, such as
 the {\it unidimensional Newton's method}.
 The updating equations at the $q$-th step of this iterative method are
 \begin{equation}
 \label{eq:iteration}
 a_i^{(q)} \,=\, 
 a_i^{(q-1)} -\frac{\partial \ell_{\bf n}({\bf a})/\partial a_i}{\partial^2 \ell_{\bf n}({\bf a})/\partial a_i^2}\bigl| {_{{\bf a}={\bf a}^{(q-1)}}}
 \quad \hbox{for} \quad
  i=1, \ldots, I-1 , \ q=1,2,\ldots \ .
 \end{equation}
 We find it convenient to rewrite the 
first derivatives (\ref{eq:scoreeqn}) as
\begin{equation}\label{lik_a}
\frac{\partial \ell_{\bf n}({\bf a})}{\partial a_i} = (1+t)\sum_{j=1}^I { \frac{s_{ij}}{2+(1-t)(a_i+a_j)}\left(1-\frac{1-t}{1+t}c_{ij}\right)\left(\frac{n_{ij}}{p_{ij}}-\frac{n_{ji}}{p_{ji}}\right) } .
\end{equation}
The second derivative equals
\begin{eqnarray}\label{Hessian_d}
\frac{\partial^2 \ell_{\bf n}({\bf a})}{\partial a_i^2} &=& -(1+t)\sum_{j=1}^I { \frac{2(1-t)s_{ij}}{[2+(1-t)(a_i+a_j)]^2}\left(1-\frac{1-t}{1+t}c_{ij}\right)\left(\frac{n_{ij}}{p_{ij}}-\frac{n_{ji}}{p_{ji}}\right) }\\
  && -(1+t)\sum_{j\neq i} { \frac{(1+t)s_{ij}^2}{[2+(1-t)(a_i+a_j)]^2}\left(1-\frac{1-t}{1+t}c_{ij}\right)^2\left(\frac{n_{ij}}{p_{ij}^2}+\frac{n_{ji}}{p_{ji}^2}\right) } .\nonumber
 \end{eqnarray}
 Here $i=1, \ldots, I-1$, the $p_{ij}$ are the expressions in (\ref{QS_t_a}), and 
\[c_{ij}=\frac{(1+t)(a_i-a_j)}{2+(1-t)(a_i+a_j)} \ .\]

We believe that the numerical
solution found by this iteration is always
the global maximum in (\ref{eq:unconstrainedMLE}).
This would be implied by the following conjecture, which holds for $t=0$ and $t=1$.

\begin{conjecture}
The Hessian $\,{\bf H}({\bf a}) = \left(\frac{\partial^2 \ell_{\bf n}({\bf a})}{\partial a_i \partial a_j}\right)$ is negative definite for all ${\bf a} \in \R^I$ with \eqref{eq:a_i parameters}. 
\end{conjecture}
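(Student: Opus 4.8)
The plan is to make $\mathbf H(\mathbf a)$ completely explicit via the reparametrization relating \eqref{QS_t_a} and \eqref{QS_t}, and then reduce negative definiteness to a single quadratic‑form inequality. First I would plug in the profiled weights \eqref{eq:gets}, which do not depend on $\mathbf a$, so that $\ell_{\mathbf n}$ becomes a genuine function of $\mathbf a$ alone. Using $1+u_{ij}=2(1+a_i-ta_j)/\bigl(2+(1-t)(a_i+a_j)\bigr)$, the off‑diagonal log‑likelihood splits as
\[
\ell_{\mathbf n}(\mathbf a)\;=\;\underbrace{\sum_{i\neq j} n_{ij}\log f_{ij}}_{L_1}\;-\;\underbrace{\sum_{i<j}(n_{ij}+n_{ji})\log g_{ij}}_{L_2}\;+\;\text{const},
\]
where $f_{ij}=1+a_i-ta_j$ and $g_{ij}=2+(1-t)(a_i+a_j)$, and the structural identity $g_{ij}=f_{ij}+f_{ji}$ drives everything. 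Both $L_1$ and $L_2$ are concave (nonnegative combinations of logarithms of affine forms, since $1-t\geq 0$), so $\ell_{\mathbf n}=L_1-L_2$ is a difference of concave functions and $-\mathbf H(\mathbf a)=A-B$ with
\[
A=\sum_{i\neq j}\frac{n_{ij}}{f_{ij}^2}(\mathbf e_i-t\mathbf e_j)(\mathbf e_i-t\mathbf e_j)^{\top},\qquad
B=(1-t)^2\sum_{i<j}\frac{n_{ij}+n_{ji}}{g_{ij}^2}(\mathbf e_i+\mathbf e_j)(\mathbf e_i+\mathbf e_j)^{\top}
\]
both positive semidefinite (this agrees with \eqref{Hessian_d}). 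Because \eqref{QS_t_a} has one‑dimensional fibers, the fiber tangent lies in $\ker(A-B)$; so the target, read through the identifiability normalization $\hat a_I=0$ introduced above, is that $A-B$ is positive definite on $\{v_I=0\}$, i.e.\ $A\succ B$ transversally.

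The key tool is the splitting $v_i-tv_j=\tfrac{1+t}{2}(v_i-v_j)+\tfrac{1-t}{2}(v_i+v_j)$, which separates the quadratic form $\mathbf v^{\top}(A-B)\mathbf v$ into an \emph{antisymmetric sector} carried by the differences $v_i-v_j$ and a \emph{symmetric sector} carried by the sums $v_i+v_j$. In the antisymmetric sector only $A$ contributes, with weight $\propto(1+t)^2$, while $B$ lives entirely in the symmetric sector; the two sectors are coupled by a cross term $\propto(1-t^2)(\,n_{ij}/f_{ij}^2-n_{ji}/f_{ji}^2)$. For $t=1$ the symmetric‑sector $A$‑contribution and all of $B$ vanish, $\ell_{\mathbf n}=L_1$ is outright concave, and $\mathbf v^{\top}A\mathbf v=\sum_{i\neq j}(n_{ij}/f_{ij}^2)(v_i-v_j)^2$ vanishes only for $\mathbf v$ constant on the support graph; connectivity of $\{\{i,j\}:n_{ij}+n_{ji}>0\}$ then gives the claim at once. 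Organizing $A-B$ by unordered pairs, each $\{i,j\}$ yields a $2\times2$ form $M_{ij}$ with
\[
\det M_{ij}=(1-t)^2\bigl[(1+t)^2\alpha\gamma-\mu(\alpha+\gamma)\bigr],\qquad
\alpha=\tfrac{n_{ij}}{f_{ij}^2},\ \gamma=\tfrac{n_{ji}}{f_{ji}^2},\ \mu=\tfrac{(n_{ij}+n_{ji})(1-t)^2}{g_{ij}^2},
\]
which is positive together with a positive trace once $t$ is close enough to $1$; there every block is already positive semidefinite and the same connectivity/spanning‑tree bookkeeping finishes the proof.

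The main obstacle is the symmetric sector for small $t$. Using $g_{ij}=f_{ij}+f_{ji}$ one finds $\det M_{ij}\big|_{t=0}=-\,(n_{ij}f_{ji}-n_{ji}f_{ij})^2/(f_{ij}^2 f_{ji}^2 g_{ij}^2)\le 0$, so near $t=0$ the individual pairwise blocks are indefinite and positivity cannot be certified edge by edge; the cancellation must come from the coupling across the graph. Here the plan is to invoke the combinatorics of Section~3: fix a spanning tree $T$ of $K_I$ and, via the tree‑adapted prime $\mathfrak P_T$ of \eqref{eq:initideal}, choose coordinates in which the tree edges supply dominant antisymmetric quadratics $(v_i-v_j)^2$ while the off‑tree (cycle) edges absorb the symmetric‑sector contribution of $B$ through the identity $g_{ij}=f_{ij}+f_{ji}$, so that $A$ becomes diagonally dominant over $B$. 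The delicate point, and where I expect the real work to lie, is making this domination hold \emph{uniformly} over the whole region \eqref{eq:a_i parameters} rather than only near a critical point; the two solved endpoints $t=0$ (a toric, hence exponential‑family, model whose information form is automatically positive semidefinite) and $t=1$ (direct concavity) are the boundary data that the interpolating estimate must respect, and propagating them along the flat deformation of Theorem~\ref{thm:ideal} from $t=1$ down to $t=0$ is the crux of the argument.
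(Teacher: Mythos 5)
First, a point of comparison that matters: the paper does \emph{not} prove this statement. It is posed explicitly as a conjecture, verified numerically for many instances with $t\in(0,1)$, and asserted to hold only at the endpoints $t=0$ and $t=1$. So there is no paper proof to match, and your proposal must stand on its own — which, by your own admission in the final paragraph, it does not: the ``uniform domination'' step is left as the crux. Your reductions up to that point are sound and genuinely useful: profiling out $\hat{\mathbf s}$ via \eqref{eq:gets}, the factorization $p_{ij}\propto f_{ij}/g_{ij}$ with the structural identity $g_{ij}=f_{ij}+f_{ji}$, the resulting $-\mathbf H=A-B$ with $A,B\succeq 0$ consistent with \eqref{Hessian_d}, the observation that the one-dimensional fibers of \eqref{QS_t_a} (in the coordinates $\beta_i=1+(1-t)a_i$ the fibers are the lines $\beta\mapsto\lambda\beta$) force reading the statement on $\{v_I=0\}$, and the clean $t=1$ endpoint. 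But the one concrete new step you offer — the pairwise $2\times 2$ block analysis — contains an algebra error that breaks your endgame. In the coordinates $(d,s)=(v_i-v_j,\,v_i+v_j)$ the cross term is $\tfrac{1-t^2}{2}(\alpha-\gamma)\,ds$ and, redoing the determinant,
\[
\det M_{ij}\;=\;\frac{(1+t)^2}{4}\Bigl[(1-t)^2\alpha\gamma-\mu(\alpha+\gamma)\Bigr]
\;=\;-\,\frac{(1+t)^2(1-t)^2}{4}\cdot\frac{\bigl(n_{ij}f_{ji}-n_{ji}f_{ij}\bigr)^2}{f_{ij}^2 f_{ji}^2 g_{ij}^2}\;\le\;0,
\]
up to the positive constant fixed by the choice of coordinates — not $(1-t)^2\bigl[(1+t)^2\alpha\gamma-\mu(\alpha+\gamma)\bigr]$; you swapped the roles of $(1+t)^2$ and $(1-t)^2$. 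Consequently each block is indefinite for \emph{every} $t\in(0,1)$ whenever $n_{ij}f_{ji}\neq n_{ji}f_{ij}$: the obstruction you correctly identified at $t=0$ persists on the whole open interval, so the ``for $t$ close to $1$ every block is already positive semidefinite, then connectivity/spanning-tree bookkeeping finishes'' route cannot work for any fixed $t<1$. Definiteness, if true, must come from global cancellation across the graph at all interior $t$, and your scheme provides no mechanism for it.

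The remaining ingredients are not arguments. The monomial primes $\mathfrak{P}_T$ of \eqref{eq:initideal} record components of an initial ideal of the toric degeneration; they carry no information about the signature of a real quadratic form at an interior point of the region \eqref{eq:a_i parameters}, and ``tree edges supply dominant antisymmetric quadratics'' is not something the ideal-theoretic combinatorics can certify. Likewise, the flatness in Theorem \ref{thm:ideal} concerns the family of ideals $\mathcal{I}_G$; negative definiteness of $\mathbf H$ is not a property that propagates along a flat family — eigenvalues can cross zero continuously in $t$, which is precisely what would have to be excluded. Even your $t=0$ boundary claim is glib: ``toric, hence exponential family'' yields concavity in the \emph{canonical} log-linear parameters, and under the non-affine reparametrization by the $a_i$'s the Hessian picks up the term $\sum_k (\partial\ell/\partial\theta_k)\,\nabla^2\theta_k$, which vanishes only at critical points; so the $t=0$ case needs its own argument (the paper asserts it but does not print one). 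Net assessment: a correct set-up and a block decomposition worth keeping, but the determinant error invalidates the only completed portion of the strategy, and the conjecture remains as open as the paper leaves it.
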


We verified this conjecture for many examples with $t \in (0,1)$.
In each case, we also ran our iterative algorithm for many starting values,
and it always converged to the same solution.

The diagonal entries of the Hessian matrix are given in (\ref{Hessian_d}), while the non-diagonal are
\begin{eqnarray}\label{lHessian_nd}
\frac{\partial^2 \ell_{\bf n}({\bf a})}{\partial a_i \partial a_j}  &=&  \frac{2(1-t)^2 s_{ij} c_{ij}}{[2+(1-t)(a_i+a_j)]^2} 
 \left(\frac{n_{ij}}{p_{ij}}-\frac{n_{ji}}{p_{ji}}\right)  \\
 && + \ \frac{(1+t)^2 s_{ij}^2}{[2+(1-t)(a_i+a_j)]^2}  \left[1-\left(\frac{1-t}{1+t} c_{ij}\right)^2\right]
\left(\frac{n_{ij}}{p_{ij}^2}+\frac{n_{ji}}{p_{ji}^2}\right) . \nonumber
\end{eqnarray}

In the iterative algorithm described above,
we had fixed the last parameter $a_I$ at zero.
This ensures identifiability, and it is done for simplicity.
The constraint $a_I = 0$ defines a reference point
for the other parameters $a_1,\ldots,a_{I-1}$.
Under this constraint, (\ref{marg_differ_a}) leads to
\[a_i = \frac{1}{1+t}\left(\frac{p_{i+}-p_{+i}}{x_{i+}} - \frac{p_{I+}-p_{+I}}{x_{I+}}\right) \quad \hbox{for} \quad  i=1, \ldots, I-1 .\]
This means that the contribution of category $i$ to marginal inhomogeneity is compared to the last category's contribution. Hence, in view of (\ref{marg_differ_a}), a
reasonable alternative constraint could be  $\sum_{j=1}^I \frac{x_{ij}}{x_{i+}}a_j = 0$.
This constraint calibrates
  each category's contribution to marginal inhomogeneity relative to the weighted average of all 
  $I$ categories.

\begin{remark}\rm
The iterative procedure described above for fitting the ${\rm QS_t}$
models was implemented by us
 in {\ttfamily R}. The algorithm works regardless of whether we impose the
  restriction $a_I=0$ or not.
We noticed that when imposing this constraint, the algorithm requires more iterations to converge. The convergence is also affected by the initial values ${\bf a}^{(0)}$ we used. A classical choice would be $a_i=0$ for all $i$, 
as this corresponds to complete symmetry. However, we observed that for ${\bf a}^{(0)}$
with coordinates 
$\frac{n_{i+}-n_{+i}}{n_{i+}+n_{+i}}$, $i=1,\ldots, I$, the convergence is faster.
\end{remark}

\begin{remark}\rm 
Here we consider the model parameter $t$ as fixed. Alternatively, it could be estimated from the data, as for the power-divergence logistic regression model in \cite{KA}.
\end{remark}

\medskip

\section{Quasisymmetric Independence}

A natural submodel of (\ref{S})
is the symmetric independence model (SI), which is given by
\begin{equation}\label{SI}
p_{ij}=s_i s_j \ , \quad i,j=1, \ldots, I .
\end{equation}
The $I$ parameters $s_i$ are non-negative and sum to $1$.
The corresponding probability tables 
${\bf p} = (p_{ij})$ are symmetric and have rank $1$.
The models of quasisymmetric independence  (${\rm QSI}_t$) can be defined analogously to the ${\rm QS}_t$ models, by measuring departure from (\ref{SI}). Namely,
replacing the symmetric probabilities $s_{ij}$ 
in (\ref{QS_t_a}) by the factored form in (\ref{SI}),  we get 
\begin{equation}\label{QSI_t}
p_{ij}=s_i s_j\left(1+\frac{(1+t)(a_i-a_j)}{2+(1-t)(a_i+a_j)}\right) \  , \ \ \  i\neq j, \ \ \  i,j=1, \ldots, I .
\end{equation}

The MLEs of the parameters of the SI model in (\ref{SI}) are
\begin{equation}\label{MLE_SI}
\hat s_i =\frac{n_{i+}+n_{+i}}{2n} \quad \hbox{for} \quad i=1, \ldots, I.
\end{equation}
These are also the MLEs of the $s_i$ parameters in the ${\rm QSI}_t$ model. The likelihood equations for ${\bf a}$ are as before, 
but with $p_{ij}$'s in (\ref{lik_a}) as defined in (\ref{SI}) and (\ref{QSI_t}). Their numerical solution 
can be computed with the  iterative procedure described in Section 4, adjusted accordingly.

\begin{remark}{\rm
In Proposition~\ref{prop1}, if we replace the models S and ${\rm QS}_t$ by SI and ${\rm QSI}_t$, then an analogous statement holds.
Thus,  we have $\text{SI}=\text {MH} \cap {\rm QSI}_t$ for each $t\in [0, 1]$. 
}
\end{remark}

Following the discussion in Section 3, it would be interesting to derive
the implicit equations for the model ${\rm QSI}_t$. At present, we have
a complete solution only for the special case $t=1$.  The quasisymmetric
independence model
${\rm QSI}_1$ is defined by the parametrization
\begin{equation}
\label{eq:qsi1}
p_{ij} \, = \, s_i s_j \cdot (1 + a_i - a_j) , \qquad  1 \leq i,j \leq I. 
\end{equation}
Alternatively,  $\{i,j\}$ could range over the edges of a graph $G$, 
as in Section 3. In the following result, whose proof we omit,
we restrict ourselves to the case of the complete graph $K_I$.

\begin{proposition}
The prime ideal of the ${\rm QSI}_1$ model in (\ref{eq:qsi1})
is generated by the following 
homogeneous quadratic polynomials (for any  choices of indices $i,j,k,\ell$ among $1,\ldots,I$): 
\begin{itemize}
\vspace{-0.1cm}
\item $(p_{ij}+p_{ji})^2-4p_{ii}p_{jj}$, \vspace{-0.13cm}
\item $p_{kk}(p_{ij}-p_{ji})+p_{ki}p_{jk}-p_{ik}p_{kj}$, \vspace{-0.13cm}
\item $(p_{ij}-p_{ji})(p_{jk}-p_{kj})+4(p_{jj}p_{ki}-p_{ji}p_{kj})$, \vspace{-0.13cm}
\item $p_{\ell i}(p_{jk}-p_{kj})+p_{\ell j}(p_{ki}-p_{ik})+ p_{\ell k}(p_{ij}-p_{ji})$, \vspace{-0.13cm}
\item $p_{i\ell}(p_{jk}-p_{kj})+p_{j\ell}(p_{ki}-p_{ik})+ p_{k\ell}(p_{ij}-p_{ji})$.
\end{itemize}
\end{proposition}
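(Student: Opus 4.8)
The plan is to exploit the symmetric/antisymmetric splitting of the table $P=(p_{ij})$. Reading off the parametrization (\ref{eq:qsi1}) gives three basic identities: $p_{ii}=s_i^2$, $\ p_{ij}+p_{ji}=2s_is_j$, and $p_{ij}-p_{ji}=2s_is_j(a_i-a_j)$. Equivalently, $\mathrm{sym}(P)=\tfrac12(P+P^{T})$ is the rank-one symmetric matrix $ss^{T}$, while $\mathrm{anti}(P)=\tfrac12(P-P^{T})$ has $(i,j)$ entry $r_{ij}:=s_is_j(a_i-a_j)$, i.e.\ it is the coboundary of the potential $(a_i)$ weighted by $s_is_j$. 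Membership of all five families in the prime ideal $\mathcal I$ is then routine: after substituting the three identities, the first family is exactly the vanishing of the principal $2\times2$ minors of $ss^{T}$, while each of the remaining four families factors into an identity expressing that $(r_{ij})$ is such a coboundary. For instance, family 4 reduces, after pulling out $s_\ell$, to the triangle relation $s_ir_{jk}+s_jr_{ki}+s_kr_{ij}=s_is_js_k\big[(a_j-a_k)+(a_k-a_i)+(a_i-a_j)\big]=0$, together with the vanishing of the second-order cross terms; families 2 and 3 are the degenerate analogues in which one vertex of the triangle is repeated.

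Let $J$ be the ideal generated by the listed quadrics; the above shows $J\subseteq \mathcal I$. To prove the reverse containment I would first establish the set-theoretic equality $V(J)=V(\mathcal I)$ on the open chart where all diagonal entries are nonzero (the realistic situation of Remark~\ref{rem:structural}). Given a point of $V(J)$ on this chart, set $s_i=\sqrt{p_{ii}}$; the symmetric $2\times2$ minors arising from the first family, together with the sign constraints from the others, force $\mathrm{sym}(P)=ss^{T}$, so that $s_is_j=\tfrac12(p_{ij}+p_{ji})\neq 0$. Putting $g_{ij}=(p_{ij}-p_{ji})/(p_{ij}+p_{ji})$, the triangle relations encoded by families 2--5 say precisely that $g$ is a coboundary, so one solves $g_{ij}=a_i-a_j$ (for example $a_I=0,\ a_i=g_{iI}$) and recovers $p_{ij}=s_is_j(1+a_i-a_j)$. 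Hence every point of $V(J)$ on this chart lies in the image of (\ref{eq:qsi1}), which gives $V(J)=V(\mathcal I)$. Since $V(\mathcal I)$ is the closure of the image of an irreducible parameter space, it is irreducible of dimension $2I-2$ (the parametrization has a two-dimensional generic fiber, from translating all $a_i$ and from scaling $s$), and therefore $V(J)$ is irreducible of that same dimension.

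It remains to show that $J$ is radical, for then $\sqrt J=I(V(J))=I(V(\mathcal I))=\mathcal I$ forces $J=\mathcal I$. This is the crux. My plan is to fix a term order and prove that the given quadrics form a Gr\"obner basis whose initial ideal $\mathrm{in}(J)$ is squarefree; a squarefree initial ideal is radical, and radicality of $\mathrm{in}(J)$ implies radicality of $J$. I expect the $S$-pair reductions to be governed by the same triangle/cocycle combinatorics used above, so that $\mathrm{in}(J)$ should admit a description as the Stanley--Reisner ideal of an explicit simplicial complex, whose Hilbert function one checks against $\dim V(\mathcal I)=2I-2$. The natural route is to confirm this by computer for small $I$ and then isolate the uniform pattern.

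The main obstacle is exactly this last, scheme-theoretic step. The membership computations and the set-theoretic equality are elementary, but controlling all $S$-polynomials (or otherwise ruling out embedded or non-reduced structure) uniformly in $I$ is delicate, precisely because the list mixes the rank-one symmetric minors with the bilinear cocycle relations; one must verify that these two kinds of generators interact without producing new quadrics or unexpected syzygies. An alternative to the Gr\"obner approach, should it prove unwieldy, would be to identify $V(\mathcal I)$ with a congruence variety $\{M A M^{T}:M\in\R^{I\times2}\}$ for the fixed invertible matrix $A=\left(\begin{smallmatrix}1&-1\\1&0\end{smallmatrix}\right)$ and to deduce primeness of $J$ from known results on symmetric determinantal ideals, but I expect establishing that the quadrics cut out this variety \emph{ideal-theoretically}, rather than merely set-theoretically, to remain the hard point either way.
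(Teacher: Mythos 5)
First, a point of comparison: the paper offers \emph{no} proof of this proposition (it is introduced with ``whose proof we omit''), so your proposal cannot be checked against the authors' argument and must stand on its own. As it stands, it does not, because the step you yourself identify as the crux is never carried out. The entire content of the proposition is that the listed quadrics generate the prime ideal $\mathcal{I}$ -- not merely that they cut out $V(\mathcal{I})$ set-theoretically -- and your treatment of that ideal-theoretic step is a plan, not a proof: no term order is fixed, no $S$-pair is reduced, no initial ideal or Stanley--Reisner complex is exhibited, and ``confirm this by computer for small $I$ and then isolate the uniform pattern'' is an intention rather than an argument. What you have actually proved is only the easy containment $J \subseteq \mathcal{I}$ (your membership computations are indeed routine and correct).

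Second, even the set-theoretic half has concrete holes. Since $J \subseteq \mathcal{I}$ gives $V(J) \supseteq V(\mathcal{I})$ for free, what you need is $V(J) \subseteq V(\mathcal{I})$, and your argument establishes this only on the chart where all $p_{ii} \neq 0$; the sentence ``which gives $V(J)=V(\mathcal I)$'' does not follow. For example, on the locus where all diagonal entries vanish, the first family forces $p_{ji} = -p_{ij}$, families 2 and 3 then vanish identically, and families 4--5 reduce to the Pfaffian-type relations $p_{\ell i}p_{jk}+p_{\ell j}p_{ki}+p_{\ell k}p_{ij}=0$; so $V(J)$ contains the rank-$\leq 2$ antisymmetric matrices. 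These happen to lie in $V(\mathcal{I})$ (substitute $a=\mu/\epsilon$, rescale by $\epsilon$, let $\epsilon \to 0$), but you neither show this nor treat the strata where only some diagonals vanish. Within the chart there is a further gap: vanishing of the \emph{principal} $2\times 2$ minors, which is all that family 1 gives, does not force a symmetric matrix to have rank one (a symmetric matrix with entries $\pm 1$ and ones on the diagonal is a counterexample), so ``the sign constraints from the others force $\mathrm{sym}(P)=ss^T$'' is hiding real work -- one must invoke family 3, which for symmetric $P$ supplies the off-principal minors. Finally, your dimension bookkeeping is off: rescaling $s \mapsto \lambda s$ multiplies $p$ by $\lambda^2$, so it is not a fiber direction of the affine parametrization; the generic fiber is the one-dimensional translation $a_i \mapsto a_i + c$, and the affine cone $V(\mathcal{I})$ has dimension $2I-1$ (your $2I-2$ is the projective dimension). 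This slip matters precisely where you propose to use it, namely in checking a Hilbert-function or dimension count against a conjectural squarefree initial ideal.
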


\smallskip

The general case where $t <  1$ differs from the $t=1$ case in that the prime
ideal of $ {\rm QSI}_1$  is no longer generated by quadrics.
Even for $I=3$, a minimal generator of degree $3$ is needed:

\begin{example} \label{ex:runningQSI} \rm
Fix $I= 3$. 
For general $t \in \R$, we consider the model
 (\ref{QSI_t}) with $p_{ii} = s_i s_i$ for $i=1,2,3$.
 Its ideal
is minimally generated by $7$ polynomials: $6$ quadrics and one cubic.
\hfill $\diamondsuit$
\end{example}

\section{Fitting the Models to Data}

We next illustrate the new models and their features on some characteristic data sets. 
The goodness-of-fit of a model is tested asymptotically by the likelihood ratio statistic.
The associated degrees of freedom for ${\rm QS}_t$ and ${\rm QSI}_t$ are $\,df({\rm QS}_t)=(I-1)(I-2)/2\,$ 
and $\,df({\rm QSI}_t)=(I-1)^2$, respectively.
As we shell see, the models in each family 
can perform either quite similar or differ significantly,
depending on the specific data under consideration.

A case of similar behavior is the classical vision example of Table \ref{vision_data}. The model of QS ($t=0$) has been applied on this data often in the literature, while  \citet{KP} applied Pearsonian QS.
 Both models provide a quite similar fit, namely ($G^2=7.27076$, $p$-value $=0.06375$) for ${\rm QS}_0$
 and ($G^2=7.26199$, $p$-value $=0.06400$) for  ${\rm QS}_1$.
 Here, $df=3$.

 The behavior of the ${\rm QS}_t$ models for $t \in (0, 1)$ is similar.
 The log-likelihood values  vary from $-16388.11444$ ($t=0$) to $-16388.11006$ ($t=1$) while the saturated log-likelihood is $-16384.47906$ (see Figure \ref{examples_plot}, left). 
 Table \ref{vision_data_MLE} gives 
 the MLEs of the expected cell frequencies under the models $QS_0$, $QS_1$ and $QS_{2/3}$.
 For $t=2/3$ we get $G^2=7.26234$, with $p$-value $=0.06399$.

\begin{figure}[h]
\centerline{
\begin{tabular}{cc}
\includegraphics[scale=0.35]{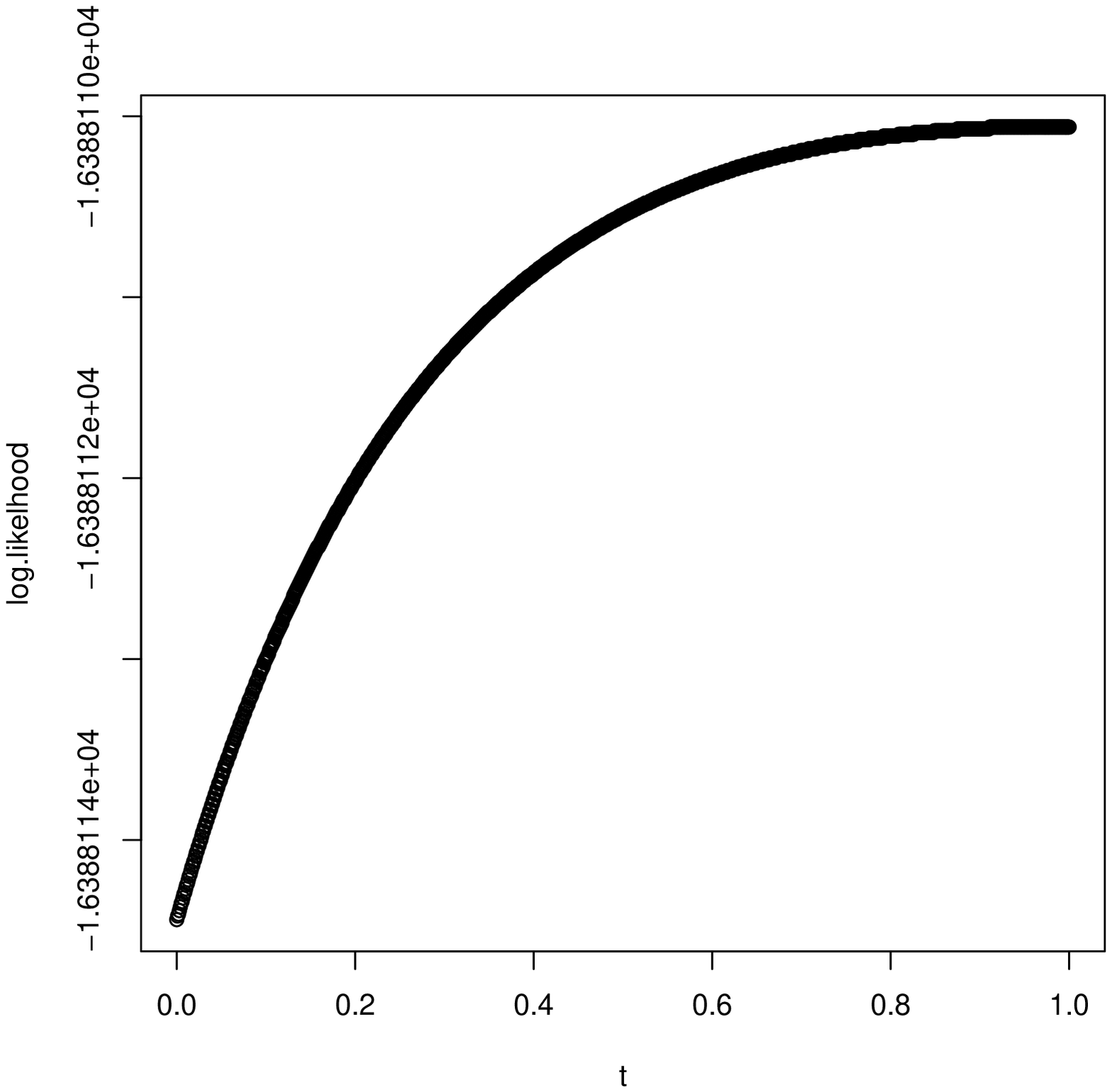}& \includegraphics[scale=0.35]{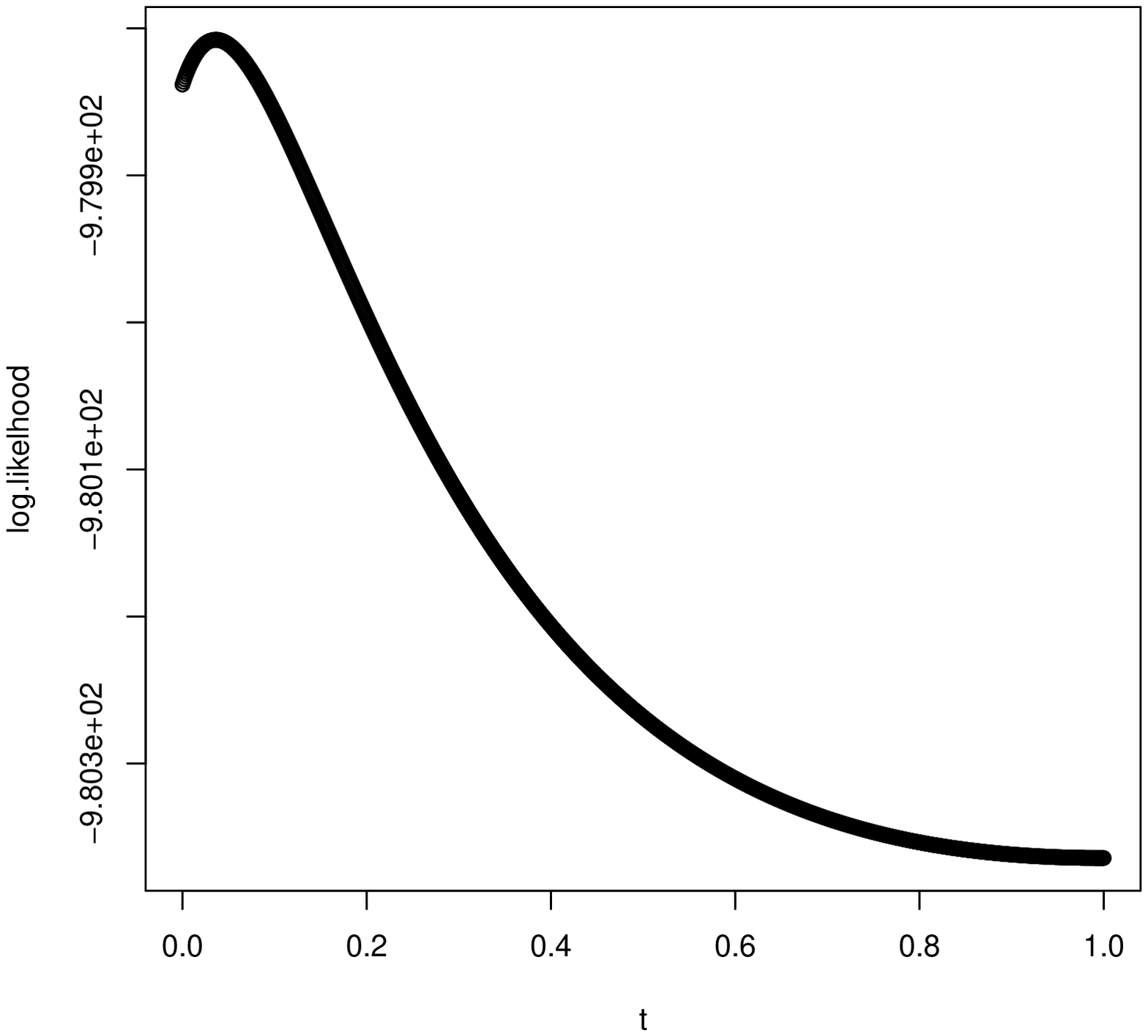}
\end{tabular}
}\vspace{-0.23in}
\caption{Log-likelihood values of ${\rm QS}_t$ for $t$ in $[0,1]$ for data in Tables \ref{vision_data} (left) and  \ref{examples_3x3_data}(c) (right).
} \vspace{-5pt}\label{examples_plot}\end{figure}

\begin{table}[h]

\begin{scriptsize}
\begin{center}
{\bf Left Eye Grade}  \\ 
\begin{tabular}{c@{~~}c@{~~}c@{~~}c@{~~}c}
\hline 
 {\bf Right Eye} & &  & & \\
 {\bf Grade} & best & second & third & worst \\

\hline
best & 1520  & 266 & 124 & 66  \\[-0.5ex]
			 &--&(263.38$^a$/~263.38$^b$/~263.39$^c$)&(133.58/~133.59/~133.60)&(59.04/~59.09/~59.09)
\\
second & 234 & 1512 &  432 &  78 \\[-0.5ex]
       &(236.62/~236.62/~236.61)&--&(418.99/~418.90/~418.90)&(88.39/~88.40/~88.40)
\\
third & 117 & 362 & 1772 & 205  \\[-0.5ex]
			 &(107.42/~107.40/~107.40)&(375.01/~375.10/~375.10)&--&(201.57/~201.58/~201.58)
\\
worst & 36 & 82 & 179 & 492  \\[-0.5ex]
       &(42.96/~42.91/~42.91)&(71.61/~71.60/~71.60)&(182.43/~182.42/~182.42)&--
\\ \hline
\end{tabular}
\end{center}
\end{scriptsize}
\vspace{-0.18in}
\caption{Unaided distance 
vision of right and left eyes for 7477 women.
Parenthesized values are ML estimates  of the expected frequencies under models ($a$) $QS_0$, (b) $QS_{2/3}$, and  ($c$) $QS_1$. 
\label{vision_data_MLE}} 
\end{table}

Examples for which the members of the ${\rm QS}_t$ family are not of similar performance are the two $3\times 3$ tables of \citet[Tables 3 and 4]{KP}, 
displayed in Table \ref{examples_3x3_data} (a) and (b).
\begin{table}[h]
\begin{center}
\begin{tabular}{ccccc} 
\begin{tabular}{c@{~~~}c@{~~~}c@{~~~}c}
{\bf (a)} &&& \\
\hline 
& {\bf 1} & {\bf 2} & {\bf 3} \\
\hline 
{\bf 1} & 28 & 10 & 15 \\
{\bf 1} & 122 & 126 & 102 \\
{\bf 1} & 49 & 22 & 26 \\
\hline
\end{tabular}
& \hspace{1cm} & 
\begin{tabular}{c@{~~~}c@{~~~}c@{~~~}c}
{\bf (b)} &&& \\
\hline 
& {\bf 1} & {\bf 2} & {\bf 3} \\
\hline 
{\bf 1} & 38 & 128 & 36 \\
{\bf 1} & 5 & 119 & 43 \\
{\bf 1} & 12 & 88 & 31 \\
\hline
\end{tabular}
& \hspace{1cm} & 
\begin{tabular}{c@{~~~}c@{~~~}c@{~~~}c}
{\bf (c)} &&& \\
\hline 
& {\bf 1} & {\bf 2} & {\bf 3} \\
\hline 
{\bf 1} & 28 & 12 & 25 \\
{\bf 1} & 122 & 126 & 102 \\
{\bf 1} & 49 & 22 & 26 \\
\hline
\end{tabular}
\end{tabular}
\end{center}
\vspace{-0.2in}
\caption{Simulated $3\times 3$ examples of \citet{KP}, generated by  the models (a)  ${\rm QS}_0$ and (b) 
${\rm QS}_1$ (their Tables 3 and 4, respectively).
A toy example in (c).
\label{examples_3x3_data}}
\end{table}
Here, the models ${\rm QS}_0$ and ${\rm QS}_1$ differ considerably
 in their fit. In particular, the data in Table \ref{examples_3x3_data} (a) are 
modeled well by ${\rm QS}_0$ but not by ${\rm QS}_1$ ($G^2_0=0.18572$ and $G^2_1=5.29006$), while the opposite holds for Table \ref{examples_3x3_data} (b), since $G^2_0=6.29035$ and $G^2_1=0.29215$. 

In such situations, the question arises whether some $t$ is appropriate for both data sets.
Finding $t$ such that ${\rm QS}_t$ works for  two or more $I\times I$ tables of the same set-up is of special interest in the study of stratified tables. 
Using  the same model on all strata makes parameter estimates among models comparable. This is a major advantage of the proposed family. 

Models that lie `in-between' the two extreme cases ($t=0$ and $t=1$) may lead to a consensus.
Even if that consensus model does not
perform as well as ${\rm QS}_0$ and ${\rm QS}_1$ on each table separately, it can provide a reasonable fit for both tables. 
To visualize this, Figure \ref{examples_3x3} (left) shows 
the $p$-values of the fit of the ${\rm QS}_t$ models with $t\in[0, 1]$, 
 for Tables \ref{examples_3x3_data} (a) and (b), by solid and dashed curves, respectively, along with the significance level of $\alpha=0.05$. The consensus model ${\rm QS}_t$ 
 would have
  $t\in(0.061, 0.302)$. Among these models, we propose  ${\rm QS}_{0.14}$, since the intersection of the two curves happens around $t=0.137$. The fit of this model for Table \ref{examples_3x3_data} (a) is $G^2=2.27614$ ($p$-value=0.1314) while for (b) 
 it is $G^2=2.16744$ ($p$-value=0.1409). The vector of MLEs for parameters $a_i$ is $(-0.5458, 1.8555, 0)$ and $(2.1247, -0.5406, 0)$, respectively. 
 We note that,  in deriving the consensus model,
   the $G^2$ values could have been used 
 as an alternative to the $p$-values  in Figure \ref{examples_3x3}.

\begin{figure}
\centerline{
\begin{tabular}{cc}
\includegraphics[scale=0.43]{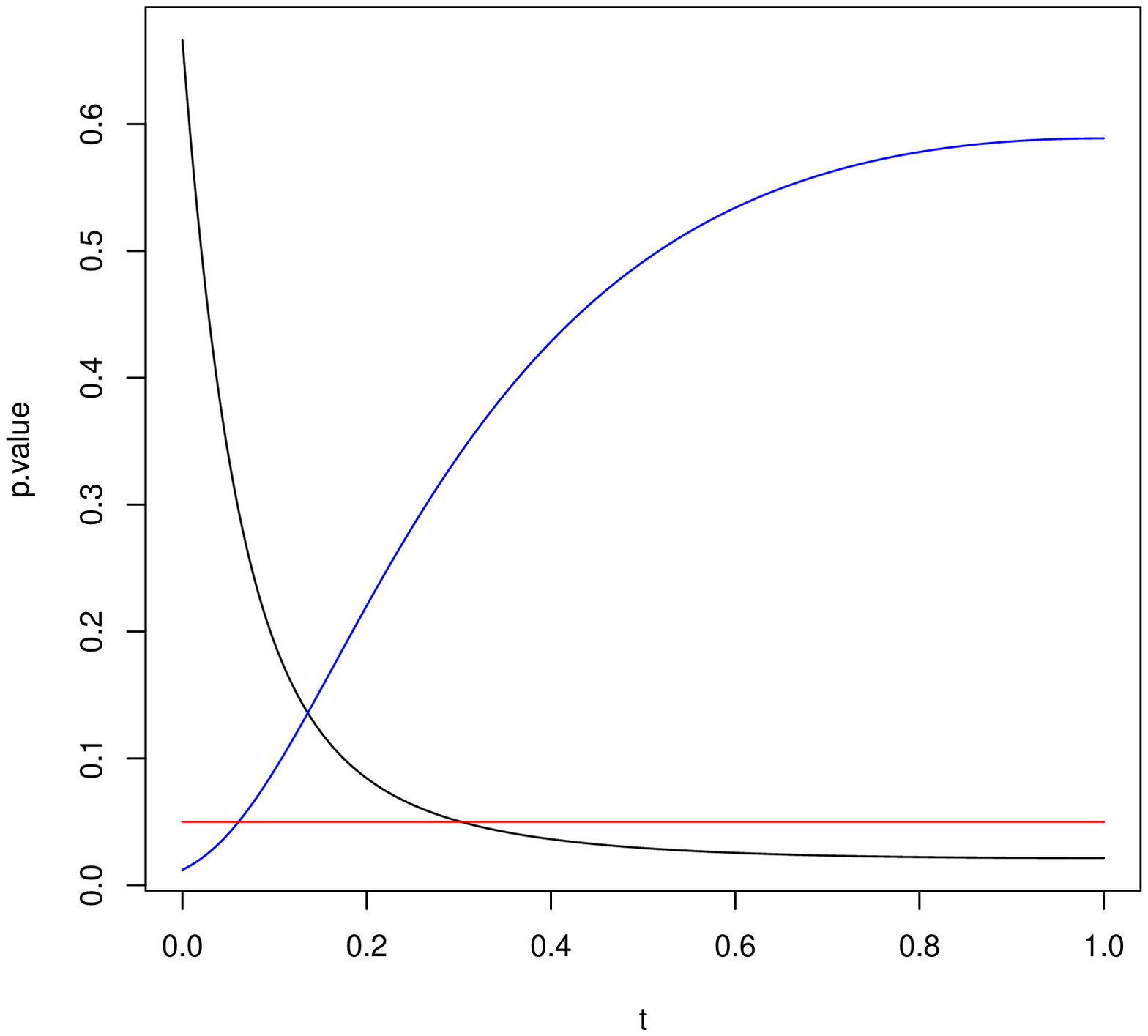}& \includegraphics[scale=0.43]{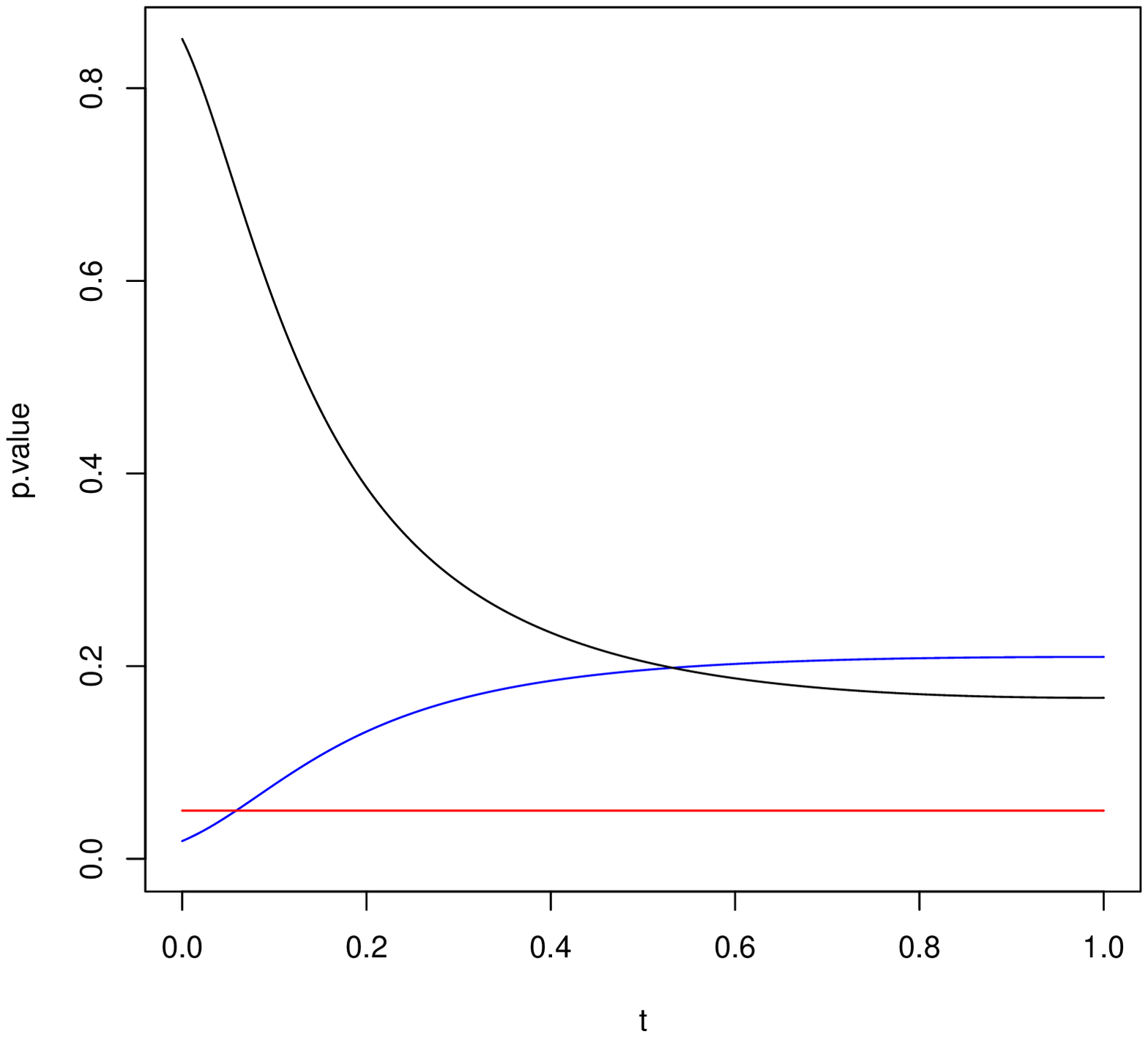}
\end{tabular}
}
\vspace{-0.3in}
\caption{
$p$-values for the $G^2$ goodness-of-fit test of ${\rm QS}_t$ (left) and ${\rm QSI}_t$ (right) for $t \in [0,1]$, along with the significance level $\alpha=0.05$.
Data are from Table \ref{examples_3x3_data}: (a) solid and (b) dashed.}
\label{examples_3x3}\end{figure}

\begin{figure}[h]
\begin{center}
\begin{tabular}{cc}
\includegraphics[scale=0.37]{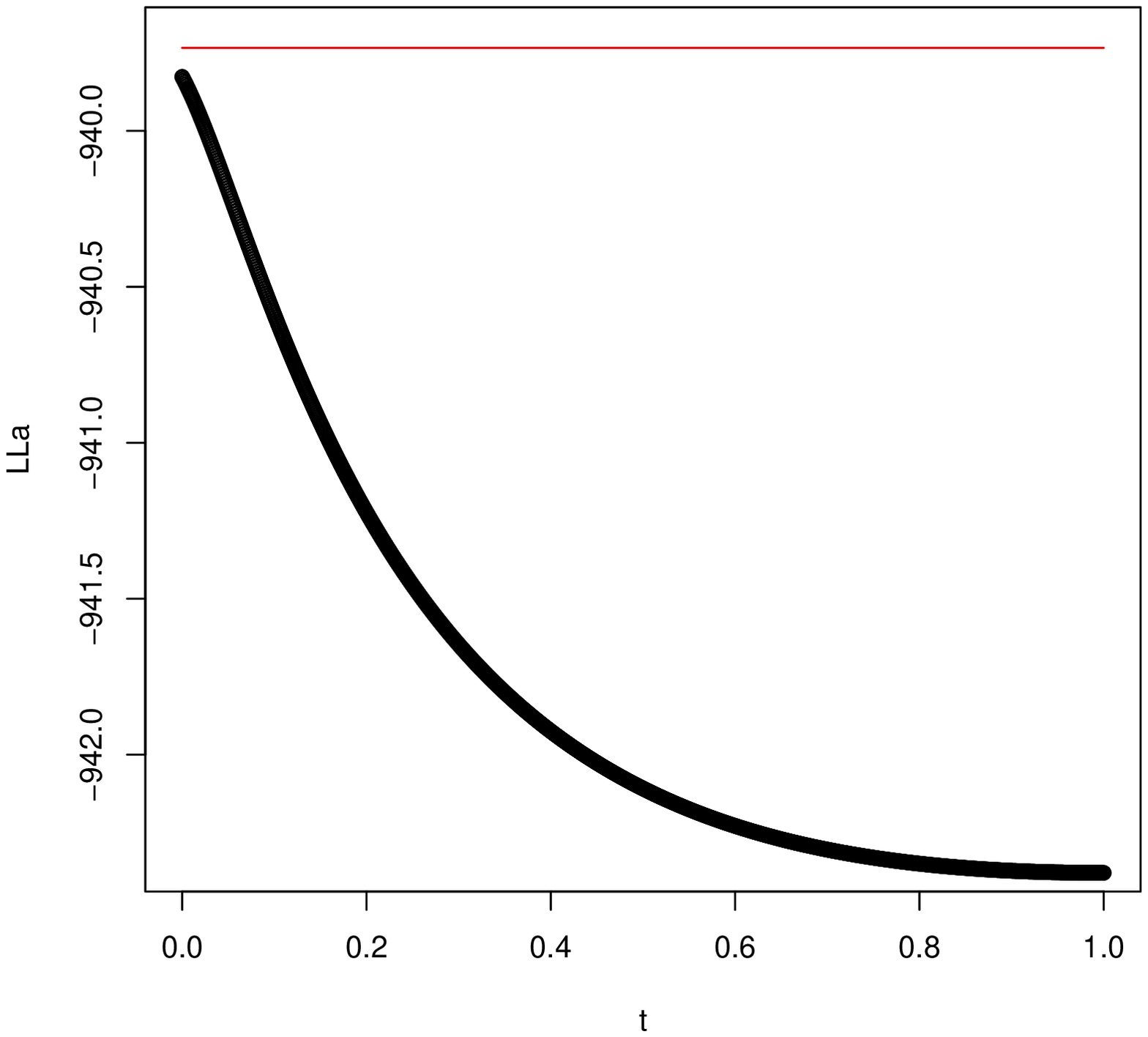}& \includegraphics[scale=0.37]{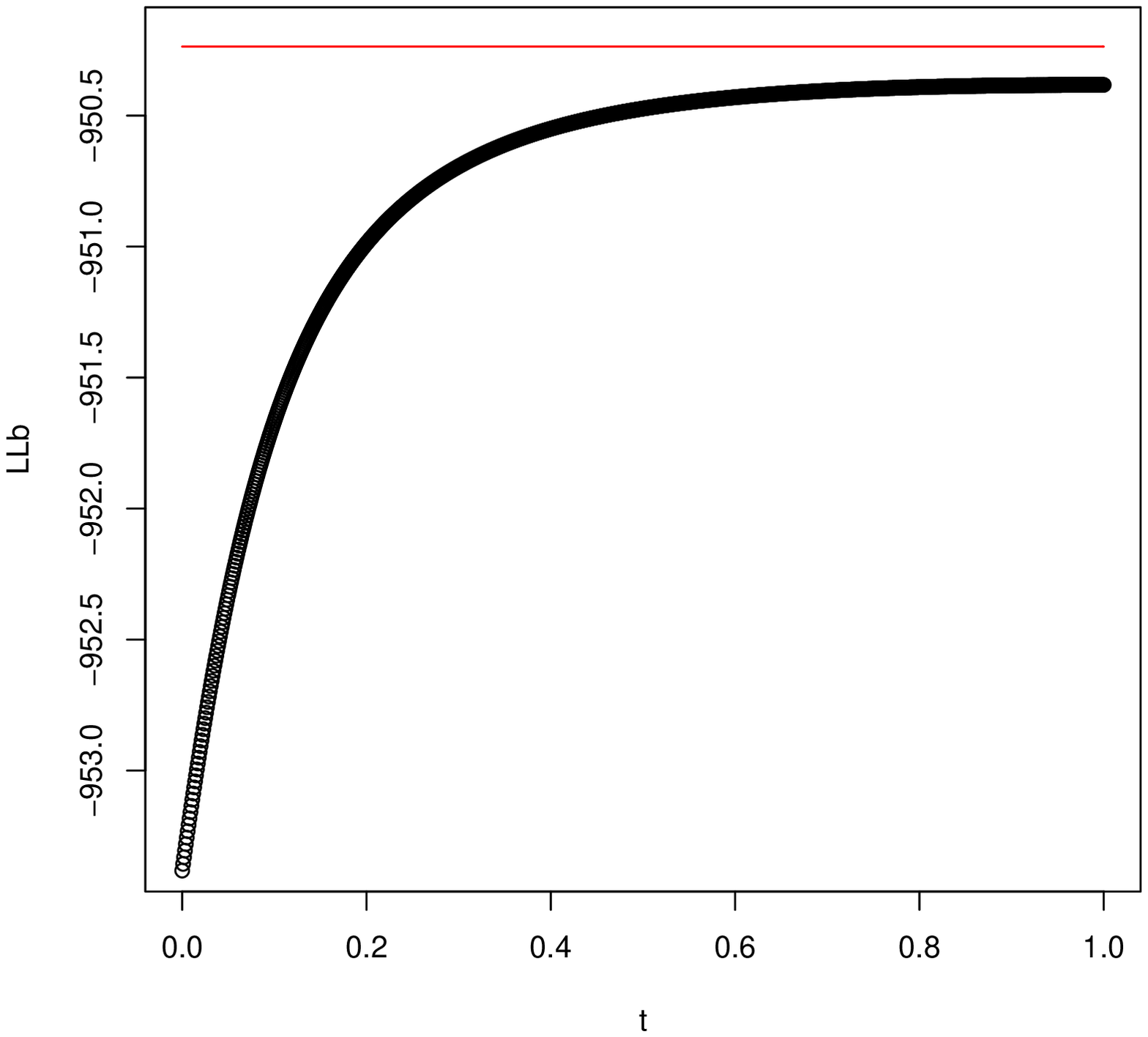} 
\vspace{-0.34in} \\
\includegraphics[scale=0.37]{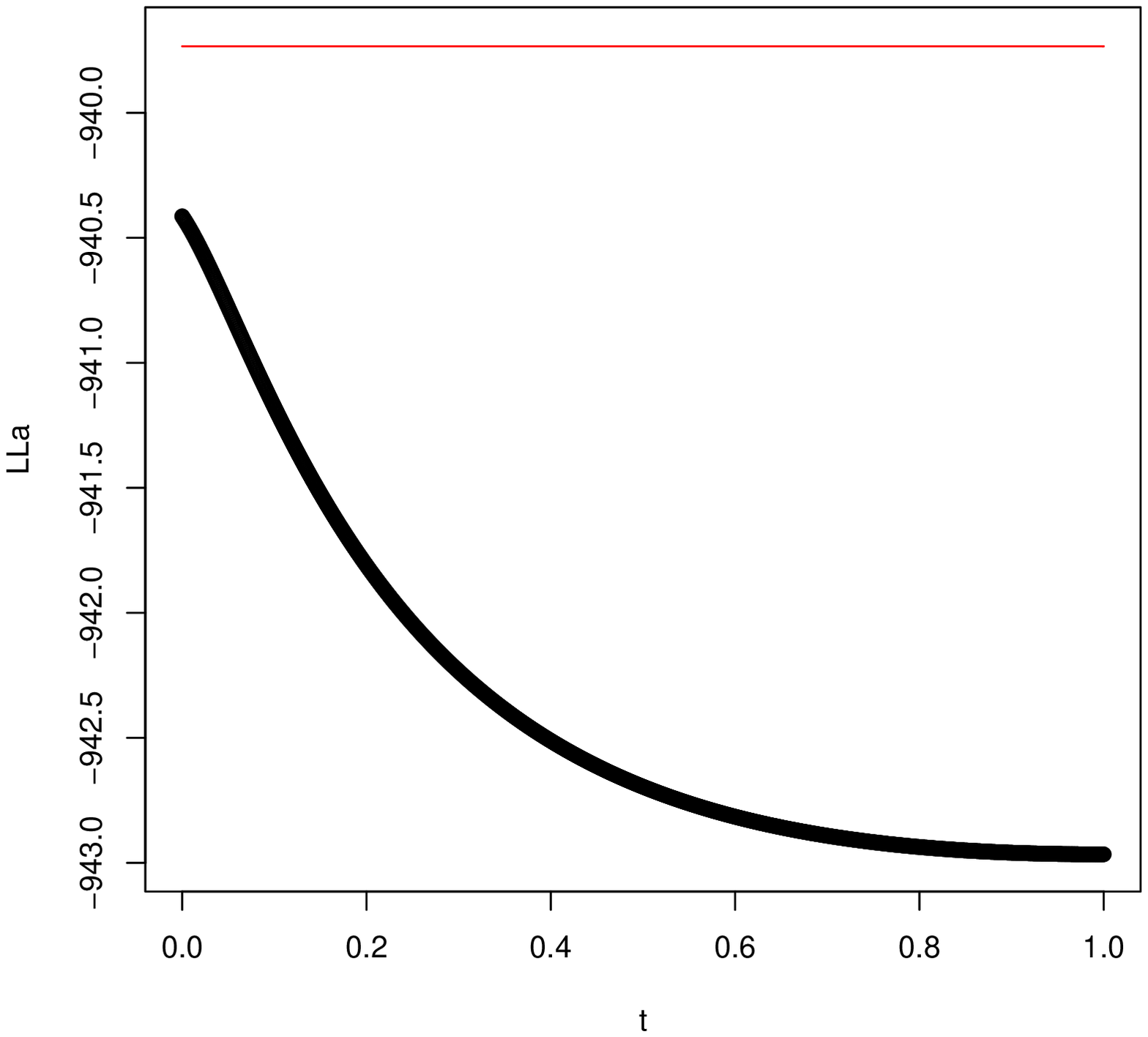}& \includegraphics[scale=0.37]{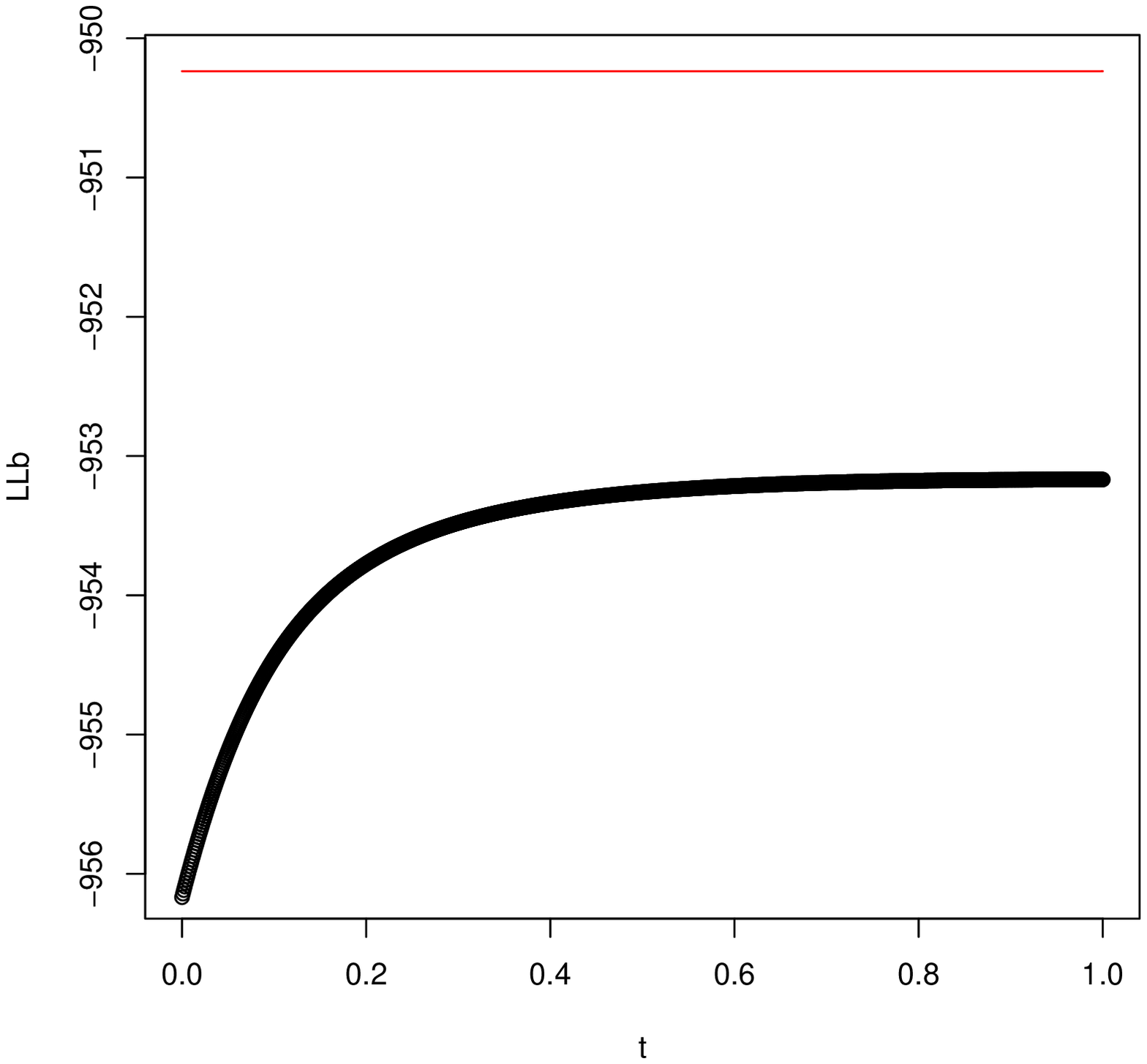} 
\end{tabular}
\end{center}
\vspace{-0.34in}
\caption{
Log-likelihood values of ${\rm QS}_t$ (upper) and ${\rm QSI}_t$ (lower) with $t \in [0,1]$ for
the data in Table \ref{examples_3x3_data}
 (a, left) and (b, right).  The straight line marks the saturated log-likelihood value.
\label{examples33LL_plot}} \vspace{-1ex}
\end{figure}

In all examples treated so far, the log-likelihood under ${\rm QS}_t$ was monotone in $t$ (see Figure \ref{examples_plot}, left, and Figure \ref{examples33LL_plot}, upper), 
suggesting that the `best' model will be achieved at either  $t=0$ or  $t=1$. This is not always the case. For example, for the data in Table \ref{examples_3x3_data} (c), the best fit occurs for $t=0.036$ (see also Figure \ref{examples_plot}, right), giving $G^2=1.742943\cdot 10^{-6}$ ($p$-value=0.9989) while for $t=0$ and $t=1$, it is $G^2=0.0610$ ($p$-value= 0.8049) and $G^2=1.1131$ ($p$-value=0.2914), respectively. 
Furthermore, even when the best model is for $t=0$ or $t=1$, we may still want to use  some $t\in (0, 1)$,
{\it e.g.}~for stratified tables with different optimal model at each level of the stratifying variable, as explained~above.

Applying the quasisymmetric independence models to Tables \ref{examples_3x3_data} (a) and (b), we observe that  $QSI_0$ fits well on Table \ref{examples_3x3_data} (a) but not on (b), while model $QS_1$ is of acceptable fit for both data sets. Indeed, we have $G_a^2(QSI_0)=1.3600$ ($p$-value=0.8511), $G_b^2(QSI_0)=11.8622$ ($p$-value=0.0184), $G_a^2(QSI_1)=6.4643$ ($p$-value=0.1671) and $G_b^2(QSI_1)=5.8640$ ($p$-value=0.2095). For the performance of the ${\rm QSI}_t$ model for $t \in [0, 1]$, see Figure \ref{examples_3x3} (right) and Figure \ref{examples33LL_plot} (lower). For $t=0.532$, the $p$-value of the fit of the model is equal to 0.1983 for both data sets.

All the examples of this section were worked out with {\tt R} functions we developed for fitting the ${\rm QS}_t$ and ${\rm QSI}_t$ models via the unidimensional Newton's method. 
The adopted inferential approach is asymptotic. In cases of small sample size, exact inference can be carried out via algebraic 
computations along the lines described in Section 3, and demonstrated in Examples 
\ref{ex:primenumbers} and \ref{ex:primenumbers2}.

\section{Divergence Measures}

The one-parameter family of QS models we proposed, ${\rm QS}_t$, $t \in [0, 1]$, connects the classical QS
model ($t=0$) and the Pearsonian QS model ($t=1$). These two belong both to a broader class of generalized QS models
that are derived using the concept of {\em $\phi$--divergence} \citep{KP, Pardo}. 
Measures of divergence quantify the distance between two probability distributions and play an important role in information theory and statistical inference. A well known divergence measure is the Kullback-Leibler (KL) divergence. However there exist broader classes of divergences. Such a class, including the KL as a special case, is the $\phi$-divergence. In the framework of two-dimensional contingency tables, 
this class is defined as follows. 

Let ${\mathbf p}=(p_{ij})$ and ${\mathbf
q}=(q_{ij})$ be two discrete bivariate probability
distributions. The {\em $\phi$--divergence} between ${\mathbf p}$
and ${\mathbf q}$ (or Csiszar's measure of information in
${\mathbf q}$ about ${\mathbf p}$) is defined~by
\begin{equation}
\label{phi-divergence} 
D_{\phi}({\mathbf p}, {\mathbf q})\,\,=\,\,\sum_{i,j}{q_{ij}\phi(p_{ij}/q_{ij})} .
\end{equation}
Here $\phi : [0, \infty) \rightarrow \mathbb{R}^+$ is a convex function such that $\,\phi(1)=\phi'(1)=0$, 
$\,0 \cdot \phi(0/0)=0$, and $\,0 \cdot \phi(x/0)= x \cdot \lim_{u\rightarrow\infty} \phi(u)/u$. For $\phi(u)=u\log (u)-u+1$ and $\phi(u)=(u-1)^2/2$, 
the divergence (\ref{phi-divergence}) becomes the KL and the Pearson's divergence, respectively. We adopt the notation in \citet{Pardo}. For properties of $\phi$-divergence, as well as a list of well-known divergences belonging to this family, we refer to \citep[Section 1.2]{Pardo}.
The differential geometric structure of the
Riemannian metric induced by such a divergence function is studied by \citet{AC}.

The generalized QS models introduced by \citet{KP} are based on the $\phi$-divergence and are characterized by the fact that each model in this class is the closest model to symmetry S, when the distance is measured by the corresponding divergence measure.  The classical QS model corresponds to the KL divergence, while the Pearsonian QS
corresponds to Pearson's distance.
  We shall prove in Theorem \ref{phi_div} that the other members of the ${\rm QS}_t$ family, {\it i.e.}~for $t \in (0,1)$, are  $\phi$-divergence QS models as well, and we identify the corresponding $\phi$ function. 

\begin{theorem}\label{phi_div}
Fix  $t \in (0, 1)$ and consider
the class of models that preserve the given row (or column) marginals $p_{i+}$ (or $p_{+i}$)
for $i=1,\ldots, I$, and also preserve the given sums $p_{ij}+p_{ji}=2s_{ij}$ for
$i, j=1,\ldots, I$.
In this class,
the ${\rm QS}_t$ model \eqref{QS_t_a}
is the closest model to the complete symmetry model S in \eqref{S},
where `closest' refers  the $\phi$-divergence  defined by
\begin{equation}\label{phi_function}
\begin{matrix}
& \phi(u) & = & f_t(u)-f_t(1)-f_t'(1)(u-1) ,
\smallskip \\
\hbox{where} & f_t(u)& =&(u+\frac{2t}{1-t}) \log(u+\frac{2t}{1-t}).\quad
\end{matrix}
\end{equation}
\end{theorem}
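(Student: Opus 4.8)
The plan is to set up the constrained optimization problem explicitly and identify the $\phi$-divergence whose minimization yields the parametrization \eqref{QS_t_a}. First I would fix the reference symmetry table $\mathbf{s} = (s_{ij})$ with $s_{ij} = (p_{ij}+p_{ji})/2$, so that the constraints $p_{ij}+p_{ji} = 2s_{ij}$ are automatically built into the ambient space. The task is then to minimize $D_\phi(\mathbf{p},\mathbf{s})$ over all tables $\mathbf{p}$ subject to the two families of linear constraints: the symmetric-sum constraints $p_{ij}+p_{ji}=2s_{ij}$ and the marginal constraints fixing $p_{i+}$. Because both $\mathbf{p}$ and the minimizer live in the same affine slice, the off-diagonal cells pair up, and for each unordered pair $\{i,j\}$ the optimization reduces to a one-dimensional problem in the single free variable $p_{ij}$ (with $p_{ji}=2s_{ij}-p_{ij}$ determined).

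Next I would write the Lagrangian. Introducing multipliers $\lambda_i$ for the row-marginal constraints (the sum constraints having been absorbed into the geometry), the first-order stationarity condition for the pair $\{i,j\}$ reads
\begin{equation*}
\phi'\!\left(\frac{p_{ij}}{s_{ij}}\right) - \phi'\!\left(\frac{p_{ji}}{s_{ij}}\right) \;=\; \lambda_i - \lambda_j ,
\end{equation*}
since differentiating $s_{ij}\phi(p_{ij}/s_{ij})+s_{ij}\phi(p_{ji}/s_{ij})$ with $p_{ji}=2s_{ij}-p_{ij}$ produces exactly the difference of derivatives. The key computation is to verify that, for the specific $f_t$ in \eqref{phi_function}, this stationarity equation is solved precisely by the ${\rm QS}_t$ parametrization \eqref{QS_t_a}. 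Writing $u_{ij}=p_{ij}/s_{ij}=1+\frac{(1+t)(a_i-a_j)}{2+(1-t)(a_i+a_j)}$ and computing $f_t'(u) = \log\!\bigl(u+\tfrac{2t}{1-t}\bigr)+1$, I would show that
\begin{equation*}
u_{ij}+\frac{2t}{1-t}\;=\;\frac{2}{1-t}\cdot\frac{1+a_i-ta_j}{2+(1-t)(a_i+a_j)},
\end{equation*}
so that $f_t'(u_{ij})-f_t'(u_{ji}) = \log\frac{1+a_i-ta_j}{1+a_j-ta_i}$; the denominators cancel, and the right-hand side separates additively into an $i$-part and a $j$-part. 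Setting $\lambda_i = \log(1+a_i-ta_i)$ — or more precisely recognizing that the separated expression has the required form $g(a_i)-g(a_j)$ — confirms stationarity. Since $\phi$ is strictly convex, this stationary point is the unique global minimizer, which is what `closest' means.

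The main obstacle, and the step deserving the most care, is the algebraic verification that the additively separable form emerges from $f_t'(u_{ij})-f_t'(u_{ji})$, i.e.\ that after substituting \eqref{QS_t_a} the common denominator $2+(1-t)(a_i+a_j)$ drops out cleanly, leaving a ratio whose logarithm splits across $i$ and $j$. I would also need to confirm the normalization conditions on $\phi$ — namely $\phi(1)=\phi'(1)=0$ — which hold by construction since $\phi(u)=f_t(u)-f_t(1)-f_t'(1)(u-1)$ is precisely the Bregman-type correction of $f_t$, and to check convexity of $\phi$ (inherited from the strict convexity of $u\mapsto (u+c)\log(u+c)$ for $c=\frac{2t}{1-t}>0$ when $t\in(0,1)$). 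Finally I would note the two boundary cases: as $t\to 0$, $c\to 0$ and $f_t(u)\to u\log u$ recovers the Kullback--Leibler divergence and classical QS, while the limit $t\to 1$ recovers Pearson's divergence, consistent with the endpoints of the family; these serve as sanity checks rather than part of the proof proper.
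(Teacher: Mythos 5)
Your strategy --- pairing $p_{ij}$ with $p_{ji}=2s_{ij}-p_{ij}$, reducing stationarity to $\phi'(u_{ij})-\phi'(u_{ji})=\lambda_i-\lambda_j$ for $u_{ij}=p_{ij}/s_{ij}$, and verifying this on the ${\rm QS}_t$ parametrization --- is sound, and it is genuinely more self-contained than the paper's proof, which delegates the entire optimization step to \citet[Theorem~1]{KP} and only has to match parametrizations, via $p_{ij}=s_{ij}F_t^{-1}(\alpha_i+\gamma_{ij})$ and the substitution $\beta_i=1+(1-t)a_i$. However, the identity you flag as the crux of your argument is false, and so is the conclusion you draw from it. The correct computation gives
\begin{equation*}
u_{ij}+\frac{2t}{1-t}\;=\;\frac{2(1+t)}{1-t}\cdot\frac{1+(1-t)a_i}{2+(1-t)(a_i+a_j)},
\end{equation*}
since the numerator $2(1-t)(1+a_i-ta_j)+2t\bigl(2+(1-t)(a_i+a_j)\bigr)$ collapses to $2(1+t)\bigl(1+(1-t)a_i\bigr)$, all $a_j$-terms cancelling; it does not equal $\frac{2}{1-t}\cdot\frac{1+a_i-ta_j}{2+(1-t)(a_i+a_j)}$. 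Hence
\begin{equation*}
\phi'(u_{ij})-\phi'(u_{ji})\;=\;\log\frac{1+(1-t)a_i}{1+(1-t)a_j}\,,
\end{equation*}
and not $\log\frac{1+a_i-ta_j}{1+a_j-ta_i}$.

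The discrepancy is fatal to the proof as written, because your expression does not separate additively. Separability $h(a_i,a_j)=g(a_i)-g(a_j)$ forces the cocycle identity $h(a_1,a_2)+h(a_2,a_3)=h(a_1,a_3)$, and for $h(a,b)=\log\frac{1+a-tb}{1+b-ta}$ this fails: at $t=\frac{1}{3}$ and $(a_1,a_2,a_3)=(2,1,0)$ one gets $h(2,1)+h(1,0)=\log 2+\log 3=\log 6$, while $h(2,0)=\log 9$. Since differences $\lambda_i-\lambda_j$ of multipliers always satisfy the cocycle identity, no choice of $\lambda_i$ could verify stationarity if your formula were the right one. Fortunately the error is self-correcting: the true difference $\log\frac{1+(1-t)a_i}{1+(1-t)a_j}$ separates on the nose, with $\lambda_i=\log\bigl(1+(1-t)a_i\bigr)$ --- which is exactly the multiplier $\log(1+a_i-ta_i)$ you guessed (and also the paper's $\alpha_i+\ell_t$, since there $\beta_i=e^{\alpha_i+\ell_t}=1+(1-t)a_i$). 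With this one identity repaired, your remaining ingredients --- strict convexity of $\phi$ from $\phi''(u)=\bigl(u+\frac{2t}{1-t}\bigr)^{-1}>0$, the normalization $\phi(1)=\phi'(1)=0$, and sufficiency of stationary points for convex objectives under linear constraints --- do assemble into a correct, self-contained proof.
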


\begin{proof} We set $F_t(u)=\phi'(u)=\log(u+\frac{2t}{1-t})-\ell_t$, where $\ell_t=\log(1+\frac{2t}{1-t})$ is just a constant for given $t$.
This choice of constant ensures $\phi'(1)=0$. Then the inverse function
to $F_t$ is
\[
F_t^{-1}(x)=(\frac{-2t}{1-t})+e^{x+\ell_t} . \vspace{-.4cm}
\]
\vspace{-.2cm}
With this, we can write
\begin{eqnarray*}
p_{ij}&=&s_{ij}F_t^{-1}(\alpha_i+\gamma_{ij})\,=\,s_{ij}(\frac{-2t}{1-t}+e^{\alpha_i+\gamma_{ij}+\ell_t})
\,=\,s_{ij}\big(\frac{-2t}{1-t}+ \frac{\beta_i(\frac{2(1+t)}{1-t})}{\beta_i+\beta_j}\big) \ ,
\end{eqnarray*}
where   \vspace{-.3cm}
\[
\beta_i\,=\,e^{\alpha_i+\ell_t}\quad{\rm and }\quad e^{\gamma_{ij}}\,=\, \frac{\frac{2(1+t)}{1-t}}{e^{\alpha_i+\ell_t}+e^{\alpha_j+\ell_t}}\ .
\]
We next rewrite $p_{ij}$ as \vspace{-.2cm}
\[p_{ij}\,\,=\,\,s_{ij}\big(1+\frac{-(1+t)}{1-t}+ \frac{\beta_i(\frac{2(1+t)}{1-t})}{\beta_i+\beta_j}\big)
=s_{ij}\big(1+ \frac{\frac{(1+t)}{1-t}(\beta_i-\beta_j)}{\beta_i+\beta_j}\big)\ .
\]
%
Setting $\beta_i=1+(1-t)a_i$ and $\beta_j=1+(1-t)a_j$, this  translates into our parametrization
(\ref{QS_t_a}).
Now the result follows from \citet[Theorem~1]{KP}. For a probability table ${\bf s}$ 
with symmetry S,  the quantity
$D_{\phi}({\bf p},{\bf s})$ is minimized when ${\bf p}$ is   the probability table satisfying
 ${\rm QS}_t$.
\end{proof}

The fact that the ${\rm QS}_t$ models are $\phi$-divergence QS models implies that they share all the 
desirable properties of the $\phi$-divergence QS models \citep{KP}.
This includes the properties that highlight the physical interpretation issues of these models.
As far as we know, the $\phi$-divergence for the parametric $\phi_t$ function (\ref{phi_function}) has not been considered so far. Its study can be the subject of further research. Such a future project has the potential to
build a bridge between information geometry \citep{AC}
and algebraic statistics \citep{DSS}.
\bigskip \medskip

{\footnotesize
\noindent
{\bf Acknowledgements}.
Fatemeh Mohammadi was supported by  the Alexander von Humboldt Foundation.  \\
Bernd Sturmfels was supported by the NSF (DMS-0968882) and DARPA (HR0011-12-1-0011). 
}

\smallskip

\bigskip \bigskip

{
\noindent {Authors' addresses:}

\smallskip

\noindent Maria Kateri, Institute of Statistics, RWTH Aachen University, 52056 Aachen, Germany,\\
{\tt kateri@stochastik.rwth-aachen.de}

\smallskip

\noindent Fatemeh Mohammadi, Institut f\"ur Mathematik, Universit\"at Osnabr\"uck, 49069 Osnabr\"uck, \\
Germany,  {\tt fatemeh.mohammadi716@gmail.com}

\smallskip

\noindent Bernd Sturmfels,  University of California, Berkeley, CA 94720, 
USA, {\tt bernd@berkeley.edu}
}
\end{document}